\newcommand{\lline}[1]{\multicolumn{1}{:c}{#1}}
\def\rddots{\cdot^{\cdot^{\cdot}}}   
   \newcommand{\F}{\mathcal{F}}
\newtheorem{proposition}{Proposition}[section]             
\newtheorem{theorem}[proposition]{Theorem}
\newtheorem{corollary}[proposition]{Corollary}
\newtheorem{remark}[proposition]{Remark}
\newtheorem{example}[proposition]{Example}
\begin{document}
\title{Bounds on polynomial roots\\ using intercyclic companion matrices\footnote{\copyright\ 2017. T\MakeLowercase{his manuscript version is made available under the} CC-BY-NC-ND 4.0 
		\MakeLowercase{license \newline http://creativecommons.org/licenses/by-nc-nd/4.0/ }}}
\date{Preprint. November 1, 2017.} 

\author{Kevin N. Vander Meulen, Trevor Vanderwoerd}
\address{Department of Mathematics\\
Redeemer University College, Ancaster, ON, L9K 1J4, Canada.
\let\thefootnote\relax\footnote{Current address (T. Vanderwoerd): Department of Civil and Environmental Engineering, University of Waterloo, ON, N2L 3G1,
	Canada.}}
\email{kvanderm@redeemer.ca, tvanderwoerd@redeemer.ca}

\keywords{roots of polynomials, bounds, eigenvalues, Fiedler companion matrix, sparse companion matrix}
\subjclass[2010]{15A18, 15A42, 15B99, 26C10, 65F15, 65H04}

\begin{abstract}
{The Frobenius companion matrix, and more recently the Fiedler companion matrices, have been used to
provide lower and upper bounds on the modulus of any root of a polynomial $p(x)$. 
In this paper we explore
new bounds obtained from taking the $1$-norm and $\infty$-norm of a matrix in 
the wider class of intercyclic companion matrices. 
As is the case with Fiedler matrices,
we observe that the new bounds from intercyclic companion matrices can improve those from
the  Frobenius matrix by at most a factor of two.    
By using the Hessenberg  
form of an intercyclic companion matrix, we describe how to
determine the best upper bound when restricted to Fiedler companion matrices using the $\infty$-norm. We also obtain
a new general bound by considering the polynomial $x^qp(x)$ for $q>0$.
We end by considering upper bounds obtained from inverses of monic reversal polynomials of intercyclic companion
matrices, noting that these can make more significant improvements on the bounds from a 
Frobenius companion matrix for certain polynomials.
}
\end{abstract}

\maketitle

\section{Introduction}
There are various techniques for approximating the roots of a polynomial (see for example \cite{HJ, Marden}).
Some algorithms for determining the roots of a polynomial rely on a good first approximation 
(see e.g. \cite{BF}) based on the coefficients of the polynomial.
One method \cite{Melman} for finding the roots of a polynomial $p(x)$ is to find the eigenvalues
of a companion matrix, since a companion matrix has characteristic polynomial $p(x)$.
To approximate the roots of $p(x)$, one can apply
Gershgorin's Theorem \cite{Melman} or use matrix norms \cite{DDP} on a companion matrix to find
regions in the complex plane to locate the eigenvalues. For example, using these methods, 
one can obtain Cauchy's bound: if $\lambda$ is a root of 
\begin{equation}\label{poly}
p(x) = x^n + a_{n-1}x^{n-1} + a_{n-2}x^{n-2} + \cdots + a_1x + a_0
\end{equation}
then 
\begin{equation}\label{Cauchy}
\vert\lambda \vert \leq \max\{\vert a_0 \vert,1+\vert a_1\vert,1+\vert a_2\vert,\ldots,1+\vert a_{n-1}\vert\}.
\end{equation}
Typically one uses the classical \emph{Frobenius companion matrix}, 
\begin{equation}\label{frob}
\F = \begin{bmatrix}
0      & 1      & 0      & \cdots & 0        \\
0      & 0      & 1      & \cdots & 0        \\
\vdots & \vdots & \vdots & \ddots & \vdots   \\
0      & 0      & 0      & \cdots & 1        \\
-a_0   & -a_1   & -a_2   & \cdots & -a_{n-1} \\
\end{bmatrix},
\end{equation}
for these purposes but 
recently other companion matrices have been discovered. In particular, the Fiedler
companion matrices, introduced in ~\cite{Fiedler}, were explored in \cite{DDP}
to provide upper and lower bounds on the modulus of a root of $p(x)$. 
The Frobenius matrix is itself a Fiedler matrix.
More recently, 
the sparse companion matrices (also known as intercyclic companion matrices) 
were characterized in \cite{EKSV}. This class of matrices
includes the Fiedler matrices as a special case. In this paper we develop
new bounds on the modulus of a root of $p(x)$ using the larger class of sparse 
companion matrices, comparing them with other known bounds, especially those in \cite{DDP}.
We also show how some of the previous bounds are more easily obtained  by using
the Hessenberg structure of sparse companion matrices.

We start by providing formal definitions of our terms and describing
the Fiedler companion matrix in their Hessenberg form in Section~\ref{secIntro}.
In Section~\ref{secStraightNonFied}, using the $1$-norm and $\infty$-norm
 we develop upper bounds based on sparse companion matrices,
extending results that are known~\cite{DDP} for the smaller class of Fiedler matrices.
In Section~\ref{secFLshape} we use the
Hessenberg structure of sparse companion matrices to help choose the 
Fiedler companion matrix that provides the best upper bound on the modulus
of a root of $p(x)$. In Section~\ref{secExtendedPoly} we discuss the usefulness of
applying the techniques to a polynomial $x^qp(x)$ with $q>0$ to obtain bounds on the
roots of $p(x)$. Using monic reversal polynomials, we develop lower bounds on
the modulus of a root of $p(x)$ in Section~\ref{secReversal}. Then in 
Section~\ref{secInverseReversals} we consider lower and upper bounds by 
using the inverse of a sparse companion matrix.

\section{Formal Definitions}\label{secIntro}
Formally, as in \cite{EKSV}, we say a \emph{companion matrix} 
to $p(x)=x^n + a_{n-1}x^{n-1} + \cdots + a_0$ is an $n\times n$ matrix $C=C(p)$ over a field $\mathbb{F}[a_0 ,a_1, \ldots,a_{n-1}]$ with $n^2 -n$ entries constant in $\mathbb{F}$ and the remaining entries variables $-a_0 ,-a_1 ,\ldots,-a_{n-1}$ such that the characteristic polynomial of $C$ is $p$. For example, Figures~\ref{frcompanion} and \ref{fFiedler} 
display companion matrices with characteristic polynomial $p=x^5+a_4x^4+a_3x^3+a_2x^2+a_1x+a_0,$ 
and Figure~\ref{fsparse} displays companion matrices of order $6$ with characteristic polynomial
$p=x^6+a_5x^5+a_4x^4+a_3x^3+a_2x^2+a_1x+a_0.$
\begin{figure}[ht]
\[ \left[ \begin{array}{ccccc}
0    & 1    & 0    & 0    & 0 \\
0    & 0    & 1    & 0    & 0 \\ 
0    & 0    & 0 & 1    & 0 \\
0    & 0 & 0 & 0    & 1 \\
-a_0 & -a_1 & -a_2    & -a_3    & -a_4 \\
\end{array} \right],
\left[ \begin{array}{rrrrr}
0    & 1    & 0    & -a_3    & -a_2 \\
0    & 0    & -a_0    & 8    & -a_1 \\
0    & 0    & 0    & 0    & 1 \\ 
1    & 0    & -8 & -a_4 & 0 \\
0 & 0 & 0 & 1    & 0 \\
\end{array} \right], 
\left[ \begin{array}{ccccc}
-a_4    & 1    & 0    & 0    & 0 \\ 
-a_3    & 0 & 1    & 0    & 0 \\
-a_2    & 0 & 0    & 1    & 0 \\
-a_1    & 0 & 0    & 0    & 1 \\
-a_0 & 0 & 0    & 0    & 0 \\
\end{array} \right] \]
\caption{Various companion matrices}\label{frcompanion}
\end{figure}

The first matrix in Figure~\ref{frcompanion} is a Frobenius companion matrix.
The Frobenius companion matrix $\F$
is sometimes represented in other Hessenberg forms, for example the third matrix
in Figure~\ref{frcompanion} has also been referred to as a Frobenius companion matrix.
In particular, $\F^T$ has the variables
in the last column, $R\F R$ has the variables in the first row, and $R\F^TR$ has the variables 
in the first column (e.g. the last matrix in Figure~\ref{frcompanion}), using the reverse permutation 
matrix $R=\arraycolsep=0.8pt\def\arraystretch{0.8}{\tiny{\left[\begin{array}{crl}0&&1\\&\rddots&\\1&&0\end{array}\right]}}$. We will say that two companion matrices
are \emph{equivalent} if one can be obtained from the other by permutation similarity and/or transposition.  
As such, each Frobenius companion matrix of order $n$ is equivalent to $\F$ in (\ref{frob}).

\begin{figure}[ht]
\[ \left[ \begin{array}{ccc|ccc}
0    & 1    & 0    & 0    & 0    & 0    \\
0    & 0    & 1    & 0    & 0    & 0    \\ \hline
0    & -a_4 & -a_5 & 1    & 0    & 0    \\
0    & -a_3 & 0    & 0    & 1    & 0    \\
-a_1 & 0    & 0    & 0    & 0    & 1    \\
-a_0 & 0    & -a_2 & 0    & 0    & 0    \\
\end{array} \right],
\left[ \begin{array}{cc|cccc}
0    & 1    & 0    & 0    & 0    & 0    \\ \hline
-a_4 & -a_5 & 1    & 0    & 0    & 0    \\
0    & 0    & 0    & 1    & 0    & 0    \\
-a_2 & -a_3 & 0    & 0    & 1    & 0    \\
0    & 0    & 0    & 0    & 0    & 1    \\
-a_0 & -a_1 & 0    & 0    & 0    & 0    \\
\end{array} \right], 
\left[ \begin{array}{cccc|cc}
0    & 1    & 0    & 0    & 0    & 0    \\
0    & 0    & 1    & 0    & 0    & 0    \\
0    & 0    & 0    & 1    & 0    & 0    \\ \hline
-a_2 & 0    & -a_4 & -a_5 & 1    & 0    \\
-a_1 & 0    & 0    & 0    & 0    & 1    \\
-a_0 & 0    & 0    & -a_3 & 0    & 0    \\
\end{array} \right] \]
\caption{Sparse companion matrices}\label{fsparse}
\end{figure}
The Frobenius companion matrix has exactly $n-1$ of the constant entries set to $1$, the remaining $(n-1)^2$ constant entries are zero. 
As noted in~\cite{EKSV}, while a 
companion matrix must have at least $2n-1$ nonzero entries, 
there are companion matrices that have more than $2n-1$ nonzero entries.
As such,
we will say that a companion matrix is \emph{sparse} if it has exactly $2n-1$ nonzero entries. 
 Sparse companion matrices have also been called \emph{intercyclic companion matrices} because of an associated
digraph structure \cite{GSSV}. The second companion matrix in Figure~\ref{frcompanion} is not sparse.
If the $n-1$ nonzero constant entries of a sparse companion are $1$, then we call the companion matrix \emph{unit sparse}. 
 Each unit sparse companion matrix is equivalent to a unit lower triangular
Hessenberg matrix, as will be noted in Theorem~\ref{tsparse}. For $0\leq k\leq n-1$, we say the $k$-th \emph{subdiagonal} of a matrix $A$ 
consists of the entries $\{a_{k+1,1},a_{k+2,2},\ldots,a_{n,n-k}\}$. Note that the $0$-th subdiagonal is usually
called the main diagonal of a matrix. 

\begin{theorem}\cite[Corollary 4.3]{EKSV}\label{tsparse}
$A$ is an $n\times n$ unit sparse companion matrix if and only if $A$ is equivalent to a unit lower 
Hessenberg matrix 
\begin{equation}\label{sparseForm}
C= \left[ \begin{array}{ccccc}
\multirow{2}{*}{$O$} & \multicolumn{2}{c|}{\multirow{2}{*}{$I_{m}$}} & \multicolumn{2}{c}{\multirow{2}{*}{$O$}} \\
                              & \multicolumn{2}{c|}{}                        & \multicolumn{2}{c}{} \\ \hline
\multicolumn{3}{c|}{\multirow{3}{*}{$K$}} & \multicolumn{2}{c}{\multirow{2}{*}{$I_{n-m-1}$}} \\
\multicolumn{3}{c|}{}                        & \multicolumn{2}{c}{} \\
\multicolumn{3}{c|}{}                        & \multicolumn{2}{c}{$O$}\\
\end{array} \right]
\end{equation}
for some $(n-m)\times (m+1)$ matrix $K$ with $m(n-1-m)$ zero entries, 
such that $C$ has $-a_{n-1-k}$ on its $k$th subdiagonal, for $0\leq k \leq n-1$.
\end{theorem}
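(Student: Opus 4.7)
The plan is to prove the two directions separately. For the reverse direction $(\Leftarrow)$, given $C$ of the stated form, I would first count $(n-1)$ ones on the superdiagonal (from $I_m$ and $I_{n-m-1}$) plus $(n-m)(m+1)-m(n-1-m)=n$ variable entries in $K$, so $C$ has $2n-1$ nonzero entries with all constant nonzero entries equal to $1$. To check that the characteristic polynomial equals $p(x)$, I would expand $\det(xI-C)$ via the Leibniz (cycle) expansion. Two features make the computation clean: every non-trivial cycle in the digraph $D(C)$ consists of one variable arc $-a_{n-1-k}$ together with $k$ superdiagonal arcs (so the cycle has length $k+1$), and the block structure forces $i\ge m+1\ge i-k$ for each variable position $(i,i-k)\in K$, meaning every non-trivial cycle passes through the vertex $m+1$ where the only nonzero diagonal entry $-a_{n-1}$ sits. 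Consequently the permutation expansion splits cleanly into the all-fixed-point term $x^{n-1}(x+a_{n-1})$ plus one single-cycle contribution $a_{n-1-k}x^{n-1-k}$ per variable, which sum to $p(x)$.

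For the forward direction $(\Rightarrow)$, let $A$ be a unit sparse companion matrix with digraph $D(A)$. My first step is to establish that $D(A)$ is \emph{intercyclic}: if two vertex-disjoint cycles $\gamma_1,\gamma_2$ existed, their variable weights $-a_j,-a_k$ would appear in $\det(xI-A)$ as a product $a_ja_k$, violating the companion property that every coefficient of $p(x)$ be linear in the $a_i$. My second step uses intercyclicity together with the arc-count $2n-1$ to argue that the $n-1$ constant arcs of $D(A)$ form a Hamiltonian path, with each of the $n$ variable arcs closing a unique cycle against a segment of this path; relabeling vertices along this path (and transposing if necessary to orient it correctly) brings $A$ into an equivalent unit lower Hessenberg matrix with $1$s on the superdiagonal.

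Finally, to reach the block form (\ref{sparseForm}), the trace identity $\mathrm{tr}(A)=-a_{n-1}$ forces exactly one diagonal entry of $A$ to be nonzero (equal to $-a_{n-1}$); I choose $m$ so that this entry sits at $(m+1,m+1)$. Any other variable at a position $(i_0,j_0)$ with $i_0\le m$ or $j_0>m+1$ would close, together with the superdiagonal arcs between $j_0$ and $i_0$, a cycle on vertex set $\{j_0,\ldots,i_0\}$ that avoids $m+1$, hence vertex-disjoint from the self-loop at $m+1$, contradicting intercyclicity. This confines all variables to the $(n-m)\times(m+1)$ block $K$ and leaves the remaining $m(n-1-m)$ positions zero. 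Matching each variable's cycle length $k+1$ to the required polynomial degree then forces the variable on the $k$-th subdiagonal to be $-a_{n-1-k}$, exactly as in the reverse direction. The main obstacle is the second step of the forward direction---showing that the $n-1$ constant arcs necessarily assemble into a Hamiltonian path in an intercyclic digraph with $2n-1$ arcs---which requires a finer structural analysis of intercyclic digraphs than the Leibniz-expansion argument alone provides.
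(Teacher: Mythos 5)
The paper does not prove this statement; it is quoted directly from \cite[Corollary 4.3]{EKSV}, so there is no internal proof to compare against. Judged on its own terms, your reverse direction is sound: the entry count, the observation that every nontrivial cycle of the digraph consists of one variable arc on the $k$th subdiagonal together with $k$ superdiagonal arcs, and the fact that every such cycle passes through vertex $m+1$ (so no two are vertex-disjoint and the determinant expansion collapses to the all-fixed-point term plus one single-cycle term per variable) do yield $\det(xI-C)=p(x)$.

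The forward direction, however, contains a genuine gap, and it sits exactly where you flag it. First, your intercyclicity argument tacitly assumes that every cycle of $D(A)$ carries exactly one variable arc and that the monomial $a_ja_k$ arising from a disjoint pair cannot be cancelled by contributions from other permutations; neither is automatic, since a priori $A$ could contain a cycle made entirely of constant arcs, or two disjoint cycle pairs whose signed contributions to the coefficient of some $x^{n-\ell}$ cancel. Second, and more seriously, the passage from ``intercyclic with $2n-1$ nonzero entries and characteristic polynomial $p$'' to ``the $n-1$ constant entries assemble into a Hamiltonian path'' is the entire content of the cited result: it is what produces the unit Hessenberg normal form, and it requires the finer structural analysis of intercyclic digraphs carried out in \cite{EKSV} (ruling out constant cycles, showing the constant arcs are acyclic and concatenate into a single spanning path, and only then reading off the block $K$ from the trace identity and the subdiagonal degrees, as you do in your final step). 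As written, the proposal assumes this pivotal step rather than proving it, so the forward implication is not established.
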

For example, Figures \ref{fsparse} and \ref{fFiedler} give companion matrices 
with the  structure specified in (\ref{sparseForm}).  
Note that the rectangular matrix $K$ in the Hessenberg form of a sparse companion matrix
has $-a_{n-1}$ in the top right corner and $-a_0$ in the lower left corner.

In 2003, Fiedler~\cite{Fiedler} introduced some companion matrices via products of some block diagonal matrices. 
Each of the matrices introduced by Fiedler is unit sparse and is equivalent to a unit lower Hessenberg
matrix, as was noted in \cite[Corollary 4.4]{EKSV}. We will define the Fiedler matrices using this Hessenberg form.
A \emph{Fiedler companion matrix} is a sparse companion matrix 
which is unit lower Hessenberg with $-a_0$ in position $(n,1)$, 
and if $a_{k-1}$ is in position $(i,j)$ then
$a_k$ is in position $(i-1,j)$ or $(i, j+1)$ for $1\leq k \leq n-1$.
Examples of Fiedler matrices are given in Figure~\ref{fFiedler}.
\begin{figure}[ht]
\[ \left[ \begin{array}{ccc|cc}
0    & 1    & 0    & 0    & 0 \\
0    & 0    & 1    & 0    & 0 \\ \hline
0    & 0    & -a_4 & 1    & 0 \\
0    & -a_2 & -a_3 & 0    & 1 \\
-a_0 & -a_1 & 0    & 0    & 0 \\
\end{array} \right],
\left[ \begin{array}{cccc|c}
0    & 1    & 0    & 0    & 0 \\
0    & 0    & 1    & 0    & 0 \\
0    & 0    & 0    & 1    & 0 \\ \hline
0    & 0    & -a_3 & -a_4 & 1 \\
-a_0 & -a_1 & -a_2 & 0    & 0 \\
\end{array} \right], 
\left[ \begin{array}{cc|ccc}
0    & 1    & 0    & 0    & 0 \\ \hline
0    & -a_4 & 1    & 0    & 0 \\
0    & -a_3 & 0    & 1    & 0 \\
0    & -a_2 & 0    & 0    & 1 \\
-a_0 & -a_1 & 0    & 0    & 0 \\
\end{array} \right] \]
\caption{Fiedler companion matrices}\label{fFiedler}
\end{figure}
Note that the class of Fiedler companion matrices includes the Frobenius companion matrix. 

In~\cite{DDP}, bounds for the roots of a polynomial were determined using the Fiedler companion matrices
based on the products of block diagonal matrices introduced by Fiedler. Here we will take advantage of the Hessenberg form
of the Fiedler companion matrices to provide insight into bounds on polynomial roots and also compare
the bounds developed for Fiedler matrices with new bounds based on the larger class of unit sparse companion matrices.

The bounds will be obtained from matrix norms. A matrix norm is \emph{submultiplicative} if $\|AB\| \leq \|A\| \|B\|$ for all matrices $A,B \in \mathbb{F}^{n \times n}$.  
As noted in \cite{HJ}, if a matrix norm is submultiplicative and $\lambda$ is an eigenvalue of $C$ 
 then 
\begin{equation}\label{bound}
|\lambda| \leq \|C\|.
\end{equation}
Since the eigenvalues of a companion matrix $C(p)$ are the same as the roots of $p$, this technique can be used to find a bound on the roots of $p$.
However, there are many submultiplicative matrix norms and many nonequivalent unit sparse companion matrices to consider. 
As was done in \cite{DDP}, for a matrix $A=[a_{ij}]$ of order $n$, we will focus on two norms that have simple expressions, the $\infty$-norm and the $1$-norm:
$$ \|A\|_{\infty} = \max_{1\leq i \leq n}{\sum_{j=1}^{n}|a_{ij}|} \qquad \textrm{ and} \qquad \|A\|_{1} = \max_{1\leq j \leq n}{\sum_{i=1}^{n}|a_{ij}|}.$$
 We will let $N(A)$ be the minimum of the $\infty$-norm and the $1$-norm,
\begin{equation}\label{upperBound}
 N(A) = \min \{ \|A\|_{\infty},\|A\|_1 \},
\end{equation}
so that $|\lambda| \leq N(A)$
for any matrix $A$ with eigenvalue $\lambda$. Note that if $A$ and $B$ are 
equivalent matrices then $N(A)=N(B)$.

\section{Upper Bounds Based on Sparse Companion Matrices}\label{secStraightNonFied}
Let $p$ be a polynomial as in (\ref{poly}) and $C=C(p)$ 
be an arbitrary unit sparse companion matrix. Recall $\F$ is the Frobenius companion matrix of $p$. 
Let $S_i= \{k\ |\ {-a_k} \text{ is in row }i \text{ of }C \}$ and let $T_i= \{k\ |\ {-a_k} \text{ is in column }i \text{ of }C \}$ for each $i\in \{1,2,\ldots,n\}$. Note that $0 \in S_n$ and $0 \in T_1$. 
Using the norms $\|C\|_\infty$, $\|C\|_1$, $\|\F\|_\infty$, and $\|\F\|_1$ respectively, the following 
known bounds on a root $\lambda$ of $p$ are obtained from (\ref{bound}):
\begin{align}\label{4bounds}
|\lambda| &\leq \max \left\{ \sum\limits_{i \in S_n} |a_i|,\ 1+\sum\limits_{i \in S_{n-1}} |a_i|,\ \ldots,\ 1+\sum\limits_{i \in S_1} |a_i| \right\}, \nonumber \\
|\lambda|&\leq \max \left\{ \sum\limits_{i \in T_1} |a_i|,\ 1+\sum\limits_{i \in T_2}|a_i|,\ \ldots,\ 1+\sum\limits_{i \in T_n}|a_i| \right\}, \\
|\lambda|&\leq \max \left\{ 1,\ |a_0|+|a_1|+\cdots+|a_{n-1}| \right\}, {\rm \quad and} \nonumber \\
|\lambda|&\leq \max \left\{ |a_0|,\ 1+|a_1|,\ 1+|a_2|,\ \ldots,\ 1+|a_{n-1}| \right\}. \nonumber
\end{align}

We first note that, in order for $N(C)$ to be an improvement over $N(\F)$ as a bound on the roots of
$p$, it is necessary that the constant term of $p$ be less than $1$:
\begin{theorem}\label{aNoughtLess1}
Let $\F$ be the Frobenius companion matrix and let $C\neq \F$ be a  unit sparse companion matrix, both based on the polynomial $p$ in \textnormal{(\ref{poly})}.
If $N(C) < N(\F)$ then $|a_0| < 1$.
\end{theorem}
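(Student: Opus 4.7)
The plan is to prove the contrapositive: assume $|a_0|\ge 1$ and show $N(C)\ge N(\F)$. First I would compute $N(\F)$ explicitly under this assumption. From the last two bounds in (\ref{4bounds}),
$\|\F\|_\infty=\max\{1,|a_0|+|a_1|+\cdots+|a_{n-1}|\}$ and $\|\F\|_1=\max\{|a_0|,1+|a_1|,\ldots,1+|a_{n-1}|\}$. Using $|a_0|\ge 1$, the total sum $|a_0|+\cdots+|a_{n-1}|$ dominates both $|a_0|$ and any $1+|a_i|$, so $\|\F\|_\infty\ge \|\F\|_1$, whence $N(\F)=\|\F\|_1=\max\{|a_0|,\,1+\max_{i\ge 1}|a_i|\}$. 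Thus it suffices to show that both $\|C\|_\infty$ and $\|C\|_1$ are at least $\|\F\|_1$.

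For $\|C\|_\infty$, I would argue entry by entry. Since $0\in S_n$, the last-row bound gives $\|C\|_\infty\ge \sum_{k\in S_n}|a_k|\ge |a_0|$. Now fix any $i\ge 1$ and let $r$ be the row of $C$ containing $-a_i$. If $r<n$, the row bound $1+\sum_{k\in S_r}|a_k|\ge 1+|a_i|$ gives $\|C\|_\infty\ge 1+|a_i|$ directly. The delicate case is when $r=n$: here $S_n$ contains both $0$ and $i$, so $\|C\|_\infty\ge |a_0|+|a_i|\ge 1+|a_i|$, where the second inequality is exactly where the hypothesis $|a_0|\ge 1$ is spent. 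Taking the maximum over $|a_0|$ and $1+|a_i|$ for $i\ge 1$ gives $\|C\|_\infty\ge \|\F\|_1$.

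The bound $\|C\|_1\ge \|\F\|_1$ follows by the symmetric argument using the column sets $T_j$: the fact $0\in T_1$ gives $\|C\|_1\ge |a_0|$, and for each $i\ge 1$ I would split on whether $-a_i$ sits in column $1$ (where I again absorb the ``missing'' $1$ using $|a_0|\ge 1$) or in some column $j\ge 2$ (where the built-in $+1$ from the unit superdiagonal handles it). Combining the two norm bounds, $N(C)=\min(\|C\|_\infty,\|C\|_1)\ge \|\F\|_1=N(\F)$, which is the contrapositive we wanted.

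The only subtle step is the case where $-a_i$ (for some $i\ge 1$) shares a row or column with $-a_0$; this is exactly where a bound like $\sum_{k\in S_n}|a_k|$ lacks the leading $+1$ that the other row-sums have, and it is precisely compensated by $|a_0|\ge 1$. Everything else is bookkeeping on the row- and column-sum expressions in (\ref{4bounds}), and the argument does not require $C\ne \F$ (that hypothesis only rules out the trivial equality case).
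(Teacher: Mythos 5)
Your proof is correct and follows essentially the same route as the paper: prove the contrapositive, use $|a_0|\geq 1$ to pin down $N(\F)=\max\{|a_0|,1+M\}$, and then observe that every row and column of $C$ contributes either a constant $1$ or the entry $-a_0$ with $|a_0|\geq 1$, so both $\|C\|_\infty$ and $\|C\|_1$ dominate $N(\F)$. The paper organizes this as two cases on whether $|a_0|\geq 1+M$, whereas you case-split on the location of $a_i$; the substance is identical.
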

\begin{proof}
Suppose $|a_0| \geq 1$. Let $M=\max\{ |a_k|\ \vert\ 1\leq k\leq n-1\}.$

Case 1: Suppose $|a_0|\geq 1+M$. Then $|a_0| \geq 1+|a_i|$ for all $i\in \{1,2,\ldots,n-1\}$. Then $N(\F)=|a_0|$. Further $\sum\limits_{i\in S_n}|a_i| \geq |a_0|$ and $\sum\limits_{i\in T_1}|a_i| \geq |a_0|$, so $\|C\|_\infty \geq N(\F)$ and $\|C\|_1 \geq N(\F)$. Therefore $N(C) \geq N(\F)$.

Case 2: Suppose $1+M > |a_0|$. Then $\|\F\|_1=1+M$ and $N(\F) = 1+M$. Note that every row and column of $C$ contains either $1$ or $|a_0| \geq 1$. Therefore $N(C) \geq 1+M = N(\F)$.
\end{proof}

The next theorem provides necessary conditions on the shape of the companion matrix $C$ when $N(C)$ provides
an improved bound on $N(\F)$.

\begin{theorem}\label{maxPos}
Let $\F$ be the Frobenius companion matrix and let $C\neq \F$ be a unit sparse companion matrix, 
both based on the polynomial $p$ in \textnormal{(\ref{poly})}.
Let $M=\max\{ |a_k|\ \vert\ 1\leq k\leq n-1\}$.
If $N(C) < N(\F)$, then either \begin{enumerate}[label=\normalfont{\arabic*.}]
\item All coefficients with $|a_i|=M$, $i\in\{ 1,2,\ldots,n-1 \}$, are in the $n$-th row of $C$ and $\|C\|_{\infty} < N(\F) \leq \|C\|_{1}$, or
\item All coefficients with $|a_i|=M$, $i\in\{ 1,2,\ldots,n-1 \}$, are in the $1$st column of $C$ and $\|C\|_{1} < N(\F) \leq \|C\|_{\infty}$.
\end{enumerate}
\end{theorem}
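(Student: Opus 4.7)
The plan is to use Theorem~\ref{aNoughtLess1} to force $\|\F\|_1 = 1+M$, and then to translate the inequality $N(C)<N(\F)$ into constraints on where the coefficients $-a_k$ can sit inside the Hessenberg form (\ref{sparseForm}) of $C$.

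First, Theorem~\ref{aNoughtLess1} yields $|a_0|<1$. Reading $\F$ column by column gives $\|\F\|_1=\max\{|a_0|,1+M\}=1+M$, so $N(\F)\le 1+M$. I would next rule out $M=0$: if $M=0$, every $-a_k$ with $k\ge 1$ is zero, so the numerical matrices $C$ and $\F$ coincide and $N(C)=N(\F)$, contradicting the hypothesis. Thus we may assume $M>0$.

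From the unit lower Hessenberg form (\ref{sparseForm}), row $i$ of $C$ has row sum $1+\sum_{k\in S_i}|a_k|$ for $i<n$ and $\sum_{k\in S_n}|a_k|$ for $i=n$, and analogously column $1$ has column sum $\sum_{k\in T_1}|a_k|$ while column $j>1$ has column sum $1+\sum_{k\in T_j}|a_k|$. The hypothesis $N(C)<N(\F)\le 1+M$ forces at least one of $\|C\|_\infty$, $\|C\|_1$ to be strictly below $1+M$.

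The core step is the case $\|C\|_\infty<N(\F)$. For each $i<n$, the bound $1+\sum_{k\in S_i}|a_k|<1+M$ gives $\sum_{k\in S_i}|a_k|<M$, so no $k\in\{1,\ldots,n-1\}$ with $|a_k|=M$ can lie in any row other than row $n$; this is the location part of conclusion~(1). To establish $N(\F)\le\|C\|_1$, I would use that Theorem~\ref{tsparse} places $-a_0$ at position $(n,1)$: picking any $j\ge 1$ with $|a_j|=M$, the entry $-a_j$ must occupy row $n$ at a column $c\ge 2$, so column $c$ contains both a superdiagonal $1$ and $-a_j$, and hence $\|C\|_1\ge 1+M\ge N(\F)$. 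The symmetric argument (swapping rows with columns and row $n$ with column $1$) handles the case $\|C\|_1<N(\F)$ and produces conclusion~(2). The main subtlety is the dependence on $-a_0$ always occupying position $(n,1)$, which is exactly what forces the ``other'' norm to pick up the full $1+M$; the remaining work is bookkeeping between the norms and the sets $S_i,T_j$.
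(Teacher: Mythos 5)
Your proof is correct and follows essentially the same route as the paper's: both rest on Theorem~\ref{aNoughtLess1} giving $N(\F)\le 1+M$, on the fact that every row except row $n$ and every column except column $1$ of the Hessenberg form contains a superdiagonal $1$, and on the resulting inequalities $\|C\|_\infty\ge 1+M$ or $\|C\|_1\ge 1+M$ when a maximum-modulus coefficient sits in the wrong place. The only difference is organizational (you case on which norm is small and deduce the location, while the paper first pins down the location and then deduces which norm is small), and your extra $M=0$ discussion is harmless but unnecessary since the hypothesis $N(C)<N(\F)$ is vacuous in that case.
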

\begin{proof}
Suppose $N(C) < N(\F)$. By Theorem \ref{aNoughtLess1}, $|a_0| < 1$ and so $\|\F\|_1 = 1+M$ and $N(\F) \leq 1+M$.

Suppose there exists a coefficient with $|a_i|=M$, $i\in \{ 1,2,\ldots,n-1 \}$, that is not in the $n$-th row or the $1$st column of $C$. Then $\|C\|_{\infty} \geq 1+M$, $\|C\|_{1} \geq 1+M$, and $N(C) \geq 1+M$. Since $N(\F) \leq 1+M$, $N(C) \geq N(\F)$, which is a contradiction to $N(C)<N(\F)$.

Likewise, there would be a contradiction if there exist two coefficients (not including $a_0$) with modulus $M$ such that
one is in the first column of $C$ and one is in the last row of $C$.

Suppose all coefficients with $|a_i|=M$, $i\in \{ 1,2,\ldots,n-1 \}$, are in the $n$-th row. Then $\|C\|_{1} \geq 1+M.$ Therefore $\|C\|_{1} \geq N(\F).$ Since $N(C) < N(\F) \leq \|C\|_{1}$, $\|C\|_{\infty} < N(\F) \leq \|C\|_1$.

The proof of part 2 is similar with the $1$-norm and $\infty$-norm reversed.
\end{proof}

The next result demonstrates that if $N(C)$ provides a better bound than $N(\F)$ for a polynomial $p$, then either
$p$ does not have many coefficients with maximum modulus, or the maximum modulus 
is small.

\begin{corollary}\label{MIsLimited}
Suppose $M=\max\{ |a_k|\ \vert\ 1\leq k\leq n-1\}$
and $u = \vert\{ k \ \vert \ M=|a_k| \}\vert$.
If $N(C) < N(\F)$ for some unit sparse companion matrix $C$, then $(u-1)M < 1-|a_0|$.
\end{corollary}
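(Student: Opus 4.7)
The plan is to bootstrap directly from Theorems \ref{aNoughtLess1} and \ref{maxPos}, reading off the needed inequality from a single row (or column) of $C$.

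First I would invoke Theorem \ref{aNoughtLess1} to conclude $|a_0|<1$. With $M=\max\{|a_k|:1\le k\le n-1\}$, this gives $\|\F\|_1=1+M$, and hence $N(\F)\le 1+M$. This plays the role of an upper bound against which $\|C\|_\infty$ or $\|C\|_1$ must fall short.

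Next I would apply Theorem \ref{maxPos}, which forces the situation into exactly one of two symmetric cases. In Case 1 all $u$ coefficients of maximum modulus (indices in $\{1,\ldots,n-1\}$) lie in row $n$ of $C$, and moreover $\|C\|_\infty<N(\F)$. Since $0\in S_n$, row $n$ also contains the entry $-a_0$, so the $\infty$-norm of $C$ is at least the sum of absolute values in row $n$, giving
\[
|a_0|+uM \;\le\; \|C\|_\infty \;<\; N(\F) \;\le\; 1+M.
\]
Rearranging yields $(u-1)M<1-|a_0|$, as desired. Case 2 is handled identically: all $u$ maximum-modulus coefficients lie in column $1$ of $C$, which also contains $-a_0$ since $0\in T_1$, and now $\|C\|_1<N(\F)\le 1+M$ produces the same inequality.

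There is no real obstacle; the whole argument is essentially a one-line computation once the placement of the extremal coefficients and the bound $N(\F)\le 1+M$ are in hand. The only thing to be careful about is that $-a_0$ always contributes to the same row (in Case 1) or column (in Case 2) as the maximum-modulus coefficients, which is exactly what $0\in S_n$ and $0\in T_1$ guarantee.
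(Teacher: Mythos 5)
Your argument is correct and follows essentially the same route as the paper: invoke Theorem \ref{aNoughtLess1} to get $|a_0|<1$ and hence $N(\F)\le 1+M$, use Theorem \ref{maxPos} to locate all maximum-modulus coefficients in row $n$ (or column $1$), and read off $|a_0|+uM<1+M$ from that line of $C$. The only difference is that you spell out the column case explicitly where the paper treats it as the same by equivalence; that is a harmless elaboration.
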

\begin{proof}
Suppose $N(C) < N(\F)$. Then by Theorem~\ref{maxPos} all coefficients with $|a_i|=M$, $i\in \{ 1,2,\ldots,n-1 \}$, 
must be in the $n$-th row of $C$ and $\sum_j |C_{ij}|<1+M$ for each row $i$ of $C$.
Working with the $n$-th row,
\begin{align*}
M+M+\cdots +M + |a_0| &< 1+M.
\end{align*}
Hence $(u-1)M < 1-|a_0|$.
\end{proof}

The last results have described various conditions under which a unit sparse companion matrix might produce a better bound than the Frobenius companion matrix. When the unit sparse companion matrix provides a better bound, the Frobenius bound $N(\F)$ is at most twice as large as $N(C)$ and can exceed $N(C)$ by no more than 
one. This mimics what was discovered about Fiedler matrices in \cite[Theorem 4.3]{DDP}.

\begin{theorem}\label{twoIsBestRatio}
Let $\F$ be the Frobenius companion matrix and let $C\neq \F$ be a unit sparse companion matrix, both based on the polynomial $p$ in \textnormal{(\ref{poly})}. Then $N(\F)<2N(C)$
and $N(\F) - N(C) \leq 1$. The latter inequality is strict if $a_0 \neq 0$.
\end{theorem}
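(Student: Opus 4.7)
My plan is to combine a clean upper bound on $N(\F)$, namely $N(\F)\leq\|\F\|_1=\max\{|a_0|,1+M\}$ (with $M=\max\{|a_k|:1\leq k\leq n-1\}$), with a matching lower bound on $N(C)$, then split on the sign of $|a_0|-(1+M)$. Throughout I work in the Hessenberg form of Theorem~\ref{tsparse}, which costs nothing since $N$ is equivalence-invariant.

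The central estimate is $N(C)\geq M+\min\{1,|a_0|\}$. To get it, I locate some maximum-modulus coefficient $-a_j$ (with $j\geq 1$) at position $(i,\ell)$; because $-a_0$ occupies $(n,1)$, either $i<n$ or $\ell>1$. If $i<n$, the superdiagonal $1$ at $(i,i+1)$ shares row $i$ with $-a_j$, giving row sum at least $1+M$; if $i=n$, then $-a_0$ shares the row, giving row sum at least $|a_0|+M$. Either way $\|C\|_\infty\geq M+\min\{1,|a_0|\}$, and the symmetric argument for column $\ell$ handles $\|C\|_1$. I also record the coarser bound $N(C)\geq\max\{1,|a_0|\}$ coming from the superdiagonal ones and the corner entry $-a_0$.

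I then split on whether $|a_0|\geq 1+M$. In that subcase $N(\F)\leq|a_0|\leq N(C)$ with $|a_0|>0$, giving both conclusions at once. Otherwise $N(\F)\leq 1+M$, so
\[
N(\F)-N(C)\ \leq\ (1+M)-\bigl(M+\min\{1,|a_0|\}\bigr)\ =\ 1-\min\{1,|a_0|\},
\]
which is at most $1$ and strict precisely when $a_0\neq 0$; this settles the difference inequality. For the ratio inequality $N(\F)<2N(C)$, elementary arithmetic disposes of the subcases $M>1$, $M<1$ (using $N(C)\geq 1$), and $a_0\neq 0$ by comparing $2(M+\min\{1,|a_0|\})$ or $2\max\{1,|a_0|\}$ with $1+M$.

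The main obstacle is the boundary case $M=1$ with $a_0=0$, where the bounds above only yield $2N(C)\geq 2\geq N(\F)$. My plan is to handle it by observing that $N(C)=1$ forces $\|C\|_\infty=1$ or $\|C\|_1=1$; in either direction every nonzero coefficient is squeezed into a single row (respectively, column), so $\sum_{k\geq 1}|a_k|\leq 1$, whence $\|\F\|_\infty\leq 1$ and $N(\F)\leq 1<2=2N(C)$. In the complementary case $N(C)>1$, the conclusion $2N(C)>2\geq N(\F)$ is immediate, closing the proof.
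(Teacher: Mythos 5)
Your proof is correct, and while it rests on the same two estimates as the paper's --- $N(\F)\leq\|\F\|_1=\max\{|a_0|,1+M\}$ and a lower bound of the form $|a_0|+M$ on $N(C)$ --- it reaches them by a genuinely different, self-contained route. The paper first assumes $N(C)<N(\F)$ and invokes Theorems~\ref{aNoughtLess1} and~\ref{maxPos} to force $|a_0|<1$ and to place every maximum-modulus coefficient in row $n$ (or column $1$), giving $N(C)\geq|a_0|+uM$; you instead prove the unconditional inequality $N(C)\geq M+\min\{1,|a_0|\}$ directly from the Hessenberg form (a max-modulus coefficient shares its row with a superdiagonal $1$ unless it sits in row $n$, where it shares the row with $-a_0$, and symmetrically for columns), which suffices because the paper's computation only exploits $u\geq1$. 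For the ratio, the paper divides the difference bound by $N(C)\geq1$ to get $N(\F)/N(C)\leq 1/N(C)+1\leq 2$ and then rules out equality by showing $N(\F)=2N(C)$ forces $N(C)=1$ and all coefficients into row $n$, making $C$ numerically a Frobenius matrix; your split on $M$ versus $1$ isolates the same degenerate configuration ($M=1$, $a_0=0$, $N(C)=1$) and disposes of it by observing that the coefficients are then confined to a single row or column with total modulus at most $1$, whence $N(\F)\leq1<2N(C)$. Both endgames are sound; yours handles the $\|C\|_1=1$ branch explicitly rather than appealing to equivalence, and it stands alone without the preceding structural theorems, at the mild cost of not exposing the multiplicity $u$ that the paper reuses in Corollary~\ref{MIsLimited}.
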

\begin{proof}
Suppose $N(C) < N(\F)$. Let $M=\max\{ |a_k|\ \vert\ 1\leq k\leq n-1\}$. Then
\[ N(\F) = \min \left\{ \max \left\{ |a_0|,1+M \right\}, \max \left\{ 1,|a_0|+|a_1|+\cdots+|a_{n-1}| \right\} \right\}. \]
But since $|a_0| < 1$,
\[
N(\F) = \min \left\{ 1+M, \max \left\{ 1,|a_0|+|a_1|+\cdots+|a_{n-1}| \right\} \right\}.
\]
Hence $N(\F) \leq 1+M$. Similarly, with $u = \vert\{ k \ \vert \ M=|a_k| \}\vert,$
\[ N(C) \geq |a_0|+uM. \]
Thus 
\begin{align*}
N(\F) - N(C) &\leq 1+M-|a_0|-uM \\
&\leq 1-|a_0|-(u-1)M\leq 1.
\intertext{Solving for the ratio gives}
\dfrac{N(\F)}{N(C)} &\leq \dfrac{1}{N(C)} + 1 \leq 2 
\end{align*}
since $N(C)\geq 1$. 

Suppose $N(\F)=2N(C)$. 
Given that $N(\F)\geq N(C)+1$, and $N(C)\geq 1$, it follows that $N(C) = 1$. According to Theorem \ref{maxPos}, only one of $\|C\|_\infty$ and $\|C\|_1$ can be sharper than $N(\F)$. Thus by equivalence, we may assume $\|C\|_\infty$. In this case $\|C\|_\infty = 1$.  Then all coefficients not in the $n$-th row of $C$ are $0$, otherwise $\|C\|_\infty \geq 1+|a_i|$ where $a_i$ is any nonzero coefficient not in the $n$-th row of $C$. However, if the nonzero coefficients are all in the $n$-th row of $C$, $C$ is a Frobenius matrix. Then $N(C) = N(\F) = 1$, contradicting $N(\F)=2N(C)$. 
Therefore $N(\F)< 2N(C)$. 
\end{proof}

When seeking an optimal upper bound $N(A)$ over all unit sparse companion matrices $A$ using the Hessenberg structure, then one 
can restrict attention to the $\infty$-norm. In particular, if $A$ is a sparse companion matrix in Hessenberg form 
with $N(A)=\| A\|_1$ 
then $B=RA^TR$ (obtained from $A$ by a reflection across the antidiagonal) is also Hessenberg
and $B$ is equivalent to $A$ with $N(B)=\|B\|_\infty =N(A)$. 
In the next section we restrict our attention to the $\infty$-norm.

\section{Upper Bounds Based on Fiedler Companion Matrices}\label{secFLshape}
In this section, using the Hessenberg structure, for a given polynomial, we note that there is a particular Fiedler matrix 
which provides the best upper bound for $N(C)$ over all Fiedler
matrices $C$. Given $0\leq b\leq n-1$, let 
\[ L_b = \left[
\begin{array}{ccc|c}
\multirow{2}{*}{$O$} & \multicolumn{2}{c|}{\multirow{2}{*}{$I_b$}}            & \multirow{2}{*}{$O$} \\
                              & \multicolumn{2}{c|}{}                              &                    \\ \hline
     &        & -a_{n-1} & \multirow{3}{*}{$I_{n-b-1}$} \\
     & O      & \vdots   & \phantom{\qquad}   \\
     &        & -a_{b+1} &                    \\
-a_0 & \cdots & -a_{b}   & O       \\
\end{array} \right]. \]
Note that $L_b$ is a Fiedler companion matrix and  $L_{n-1} = \F$.
\begin{theorem}\label{LBestFied}
Let $F_b$  be any Fiedler companion matrix (in Hessenberg form) with $a_b$ appearing in 
row $n$ for some $b \in \{0,1,2,\ldots,n-2\}$ but $a_{b+1}$ is in row $n-1$.
Then $\|L_b\|_{\infty} \leq \|F_b\|_{\infty}$.
\end{theorem}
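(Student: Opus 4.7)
The plan is to exploit the fact that among all Fiedler matrices in which $a_b$ sits in row $n$, the matrix $L_b$ is the one that spreads the coefficients $a_{b+1},\ldots,a_{n-1}$ as thinly as possible across rows. I would first nail down what $\|L_b\|_\infty$ actually is by reading its row sums off the diagram: rows $1$ through $b$ contain only the superdiagonal $1$, so each has sum $1$; row $i$ for $b+1\leq i\leq n-1$ has a superdiagonal $1$ together with exactly one coefficient placed in column $b+1$, namely the one of the form $-a_{n-i+b}$ forced by the subdiagonal labelling in Theorem~\ref{tsparse}, giving row sum $1+|a_{n-i+b}|$; and row $n$ has no superdiagonal entry, only $-a_0,\ldots,-a_b$ in its first $b+1$ columns, giving row sum $|a_0|+|a_1|+\cdots+|a_b|$. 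Setting $M' := \max\{|a_k|:b+1\leq k\leq n-1\}$, this yields
\[
\|L_b\|_\infty \;=\; \max\bigl\{\,|a_0|+|a_1|+\cdots+|a_b|,\ 1+M'\,\bigr\}.
\]

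Next I would establish the structural consequences of the hypothesis on $F_b$. Because $F_b$ is Fiedler, its coefficients lie along a staircase path that begins at position $(n,1)$ with $-a_0$ and, at each step, moves either up or to the right. Once this path leaves row $n$ it cannot return, so the assumption that $-a_b$ is in row $n$ forces \emph{every} coefficient $-a_0,-a_1,\ldots,-a_b$ to lie in row $n$, precisely in columns $1$ through $b+1$. Consequently, the $n$-th row sum of $F_b$ equals $|a_0|+|a_1|+\cdots+|a_b|$, matching $L_b$. The remaining coefficients $-a_{b+1},\ldots,-a_{n-1}$ are then distributed along a sub-staircase starting at $(n-1,b+1)$, which keeps them inside rows $b+1,\ldots,n-1$ and, in particular, inside columns where those rows also carry a superdiagonal $1$.

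From here the inequality $\|L_b\|_\infty\leq\|F_b\|_\infty$ drops out from two row-sum lower bounds: first, the $n$-th row contributes $|a_0|+\cdots+|a_b|$ in $F_b$; second, whichever coefficient $a_{k^*}$ realises the maximum $|a_{k^*}|=M'$ must sit in some row $i^*\in\{b+1,\ldots,n-1\}$, and that row already carries the superdiagonal $1$, so its row sum in $F_b$ is at least $1+M'$. Taking the maximum over rows gives $\|F_b\|_\infty\geq \max\{|a_0|+\cdots+|a_b|,\ 1+M'\} = \|L_b\|_\infty$, as desired.

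The main conceptual step, and the only one that needs care, is the monotone-staircase argument that pins the coefficients $a_0,\ldots,a_b$ into row $n$ and confines $a_{b+1},\ldots,a_{n-1}$ to rows $b+1,\ldots,n-1$ in \emph{every} admissible $F_b$; the rest is bookkeeping on the Hessenberg pattern. Aggregation of coefficients into a single row can only \emph{increase} that row's sum, so $L_b$ is extremal precisely because it places at most one coefficient per row above the last, and the proof needs to verify nothing beyond that (no edge case arises when $b=n-2$, and the case $b=n-1$ is excluded by hypothesis since then $L_b=\F$ and the claim is vacuous).
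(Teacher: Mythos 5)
Your proof is correct and follows essentially the same route as the paper's: compute $\|L_b\|_\infty=\max\{|a_0|+\cdots+|a_b|,\ 1+\max_{k>b}|a_k|\}$, use the monotone staircase to pin $a_0,\ldots,a_b$ into row $n$ of $F_b$ and the remaining coefficients into rows above it (each of which carries a superdiagonal $1$), and bound $\|F_b\|_\infty$ below by both terms of the maximum. The paper's version is just terser, singling out the row of the maximal-modulus coefficient $a_g$ with $g>b$ rather than spelling out the full staircase bookkeeping.
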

\begin{proof}
Let $K=\max\{ |a_i| \ | \ i>b\}$ and $g=\max\{ i\ |\ |a_i|=K \}.$  
Then \[\|L_b\|_{\infty} = \max \left\{ 1+K,\ \sum\limits_{j=0}^{b}|a_j| \right\}.\]
Note that there are no other coefficients of $p$ in the same row
as $a_g$ in $L_b$ but
there may be more coefficients in the same row as $a_g$ in $F_b$.
Therefore $\|F_b\|_{\infty} \geq 1+K$ and $\|F_b\|_{\infty} \geq \sum\limits_{j=0}^{b}|a_j|$. Thus $\|F_b\|_{\infty}\geq \|L_b\|_{\infty}$.
\end{proof}

Combined with Theorem \ref{maxPos}, Theorem~\ref{LBestFied} implies that when seeking an upper bound with a Fiedler matrix that improves on the Frobenius bounds, one can restrict their attention to the
$\infty$-norm of an $L_b$ Fiedler matrix.
Furthermore, the next theorem indicates the choice of $b$ such that
$L_b$ gives the best upper bound of all Fiedler matrices.

\begin{theorem}\label{LBendBest}
Let $r=\max\left\{k\ \left|  \ \sum\limits_{i=0}^{k-1} |a_i|<1 \right. \right\}$ with $r=0$ if $|a_0|\geq 1$.
Then $\|L_r\|_{\infty} \leq \|L_b\|_{\infty}$ for all $b\in \{0,1,2,\ldots, n-2\}.$
\end{theorem}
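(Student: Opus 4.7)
The plan is to give a closed-form expression for $\|L_b\|_\infty$ and then exploit its monotonicity in $b$. Reading off the row sums of $L_b$ in the same way as in the proof of Theorem \ref{LBestFied}, one sees that
$$\|L_b\|_\infty \;=\; \max\{\,1+K_b,\; s_b\,\},$$
where I write $s_b := \sum_{i=0}^{b}|a_i|$ and $K_b := \max\{|a_i| : b < i \leq n-1\}$, with conventions $s_{-1}=0$ and $K_{n-1}=0$ for the empty max. As $b$ increases, $s_b$ is non-decreasing while $K_b$ is non-increasing, and $r$ encodes the crossover between the two: by definition $s_{r-1}<1$, and by maximality of $r$ (or by the convention $|a_0|\geq 1$ when $r=0$) we also have $s_r\geq 1$.

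I would then compare $\|L_r\|_\infty$ with $\|L_b\|_\infty$ in two cases, with $b=r$ trivial. In the case $b>r$, monotonicity gives $s_b\geq s_r$, which handles the $s_r$ component of $\|L_r\|_\infty$. For the $1+K_r$ component, if $K_b=K_r$ then $\|L_b\|_\infty \geq 1+K_b = 1+K_r$ directly. Otherwise $K_b<K_r$, so the index $j$ attaining $K_r$ must lie in $\{r+1,\dots,b\}$; then $|a_j|=K_r$ appears as a summand in $s_b-s_r$, which yields $s_b\geq s_r+K_r\geq 1+K_r$. Either way $\|L_b\|_\infty\geq \|L_r\|_\infty$.

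In the case $b<r$, we have $s_b\leq s_{r-1}<1\leq 1+K_b$, so $\|L_b\|_\infty = 1+K_b$. Monotonicity of $K$ gives $1+K_r\leq 1+K_b$. Since $r>b$, the coefficient $|a_r|$ lies in the set defining $K_b$, so $|a_r|\leq K_b$ and hence $s_r = s_{r-1}+|a_r| < 1+K_b$. Both components of $\|L_r\|_\infty$ are therefore at most $1+K_b = \|L_b\|_\infty$, completing the argument.

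The main obstacle I anticipate is getting the $\infty$-norm formula exactly right (in particular, handling the empty-max convention and reading off the row sums directly from the Hessenberg structure) and then cleanly identifying $r$ as the crossover index so that the inequalities $s_{r-1}<1\leq s_r$ fall out of the definition. Once those are in place, the two cases above are just routine monotonicity bookkeeping with the sequences $(s_b)$ and $(K_b)$.
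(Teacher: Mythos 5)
Your proposal is correct and follows essentially the same route as the paper: the same closed-form for $\|L_b\|_\infty$ (the paper writes the individual row sums $1+|a_i|$ where you compress them to $1+K_b$), the same case split on $b>r$ versus $b<r$, and the same use of $s_{r-1}<1\leq s_r$. Your explicit subcase $K_b<K_r$ in the $b>r$ case actually spells out a step the paper leaves terse (why the terms $1+|a_j|$ for $r<j\leq b$ are absorbed into $s_b$), so no gap.
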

\begin{proof}
Suppose $b>r$. Then
\begin{eqnarray*}
\|L_b\|_\infty &=& \max \left\{\sum\limits_{i=0}^b |a_i|,\ 1+|a_{b+1}|, 1+|a_{b+2}|,\ldots, 1+|a_{n-1}| \right\}\\
               &=& \max \left\{\sum\limits_{i=0}^r |a_i|+\sum\limits_{i=r+1}^b |a_i|,\  1+|a_{b+1}|, 1+|a_{b+2}|,\ldots, 1+|a_{n-1}| \right\}\\
               &\geq &  \max\left\{\sum\limits_{i=0}^r |a_i|,\  1+|a_{r+1}|, 1+|a_{r+2}|,\ldots, 1+|a_{n-1}| \right\}
               =\|L_r\|_\infty.
\end{eqnarray*}               
Suppose $b<r$. Then 
\begin{eqnarray*}
\|L_b\|_\infty &=& \max \left\{\sum\limits_{i=0}^b |a_i|,\ 1+|a_{b+1}|, 1+|a_{b+2}|, \ldots, 1+|a_{n-1}| \right\}\\
               &=& \max \bigg\{1+|a_{b+1}|, 1+|a_{b+2}|, \ldots, 1+|a_{n-1}| \bigg\}\\
               &\geq &  \max \bigg\{1+|a_{r}|, 1+|a_{r+1}|,\ldots, 1+|a_{n-1}| \bigg\}\\
               &\geq & \max \left\{ \sum\limits_{i=0}^{r}|a_i|,\ 1+|a_{r+1}|, 1+|a_{r+2}|, \ldots, 1+|a_{n-1}| \right\}
               =  \|L_r\|_\infty.
\end{eqnarray*}
\end{proof}

\begin{example}
Consider the polynomial \[p = x^8 - 0.1x^7 - 0.1x^6 - 0.3x^5 - 0.1x^4 - 0.5x^3 - 0.1x^2 - 0.1x - 0.1.\] To determine the Fiedler matrix that provides the best
bound on the roots of $p$, we simply find the first partial sum $\sum\limits_{i=0}^b |a_i|$ which is more than
one. In particular,
\[ \sum\limits_{i=0}^{4}|a_i| = |-0.1| + |-0.1| + |-0.1| + |-0.5| + |-0.1| < 1, \]
but 
$$ \sum\limits_{i=0}^5|a_i|=1.2 > 1.$$
Thus, by Theorems~$\ref{LBestFied}$ and $\ref{LBendBest}$, $L_5$ gives the best bound of all Fiedler matrices.
For comparison, we calculated $\|L_b\|_\infty$ for each $b\in \{0,1,\ldots,7\}$:
\[
\begin{array}{c||c|c|c|c|c|c|c|c}
b & 0&1&2&3&4&5&6&7\\ \hline 
\|L_b\|_\infty&1.5&1.5 &1.5&1.3&  1.3  & 1.2&1.3 & 1.4\\
\end{array}.\]
\end{example}

\medskip

\section{Upper Bounds obtained from extended polynomials}\label{secExtendedPoly}
Sections \ref{secStraightNonFied} and \ref{secFLshape} illustrate some necessary conditions for a unit sparse
companion matrix of $p$  to provide a sharper bound on the roots of $p$ than the Frobenius matrix. By multiplying the polynomial $p$ by the factor $x^q$, for some $q>0$, some of these restrictions can be removed. In particular, if $q>0$, then a root of $p$ will also
be a root of $x^qp$. Thus if
$\lambda$ is a root of $p$, then
$ |\lambda| \leq N(C(x^q p)). $
Using a unit sparse companion matrix of the polynomial $x^qp$ for some $q>0$, instead of the polynomial $p$,  
can provide sharper bounds on a root of $p$ 
than the Frobenius bounds and the companion matrix bounds developed in Section \ref{secStraightNonFied}:

\begin{example}\label{extendedIsBest}
Consider the polynomial $p = x^6 - x^5 - 2x^4 - x^3 - 4x^2 - 2x - 3$. 
By Theorem~$\ref{aNoughtLess1}$, $N(C)\geq N(\F(p)) =5$ if $C=C(p)$ is a companion matrix of $p$. 
Consider a companion matrix of the polynomial $x^3 p$:
\begin{equation}\label{x3}
 C(x^3 p) = \left[ \begin{array}{cccccc|ccc}
0 & 1 & 0 & 0 & 0 & 0 & 0 & 0 & 0 \\
0 & 0 & 1 & 0 & 0 & 0 & 0 & 0 & 0 \\
0 & 0 & 0 & 1 & 0 & 0 & 0 & 0 & 0 \\ \cdashline{4-9}
0 & 0 & 0 & \lline{0} & 1 & 0 & 0 & 0 & 0 \\
0 & 0 & 0 & \lline{0} & 0 & 1 & 0 & 0 & 0 \\ \hline 
0 & 2 & 0 & \lline{0} & 0 & 1 & 1 & 0 & 0 \\
0 & 0 & 0 & \lline{0} & 1 & 2 & 0 & 1 & 0 \\
0 & 0 & 3 & \lline{0} & 0 & 0 & 0 & 0 & 1 \\
0 & 0 & 0 & \lline{0} & 0 & 4 & 0 & 0 & 0
\end{array} \right]. 
\end{equation}
The solid lines in $(\ref{x3})$ indicate the partition of the companion matrix $C(x^3 p)$ as outlined in $(\ref{sparseForm})$ and the dashed lines illustrate the fact that $C(x^3 p)$ has extra columns (and rows) compared to $C(p)$. Note that $\|C(x^3 p)\|_\infty = 4$ and so $\|C(x^3 p)\|_\infty < N(\F(p))$. Thus, using the polynomial $x^3 p$ we obtain a tighter upper bound on a root of $p$ than what can be obtained by using a companion matrix of $p$ itself. 
\end{example}

The companion matrices of $x^q p$ can be used to generalize the bound found in \cite[Theorem 3.3]{DDP}.
Note that given any partition $P = \{ P_1,P_2,\ldots,P_t \}$ of $\{ 0,1,\ldots,n-1 \}$, 
by the pigeonhole principle, there exists a $P_\ell$ such that $\max\{i\ \vert\ i \in P_\ell\} \leq n-t$. 
By relabeling, we can assume $\ell=1$.

\begin{theorem}\label{extendedBound}
Let $\lambda$ be a root of $p$ as in $(\ref{poly})$. Let $P = \{ P_1,P_2,\ldots,P_t \}$ be a partition of $\{ 0,1,\ldots,n-1 \}$
into $t$ nonempty sets, 
with $\max\{i\ \vert\ i \in P_1\} \leq n-t$. 
Then
\[ |\lambda| \leq B_P = \max\left\{ \sum\limits_{i\in P_1} |a_i|,1+\sum\limits_{i\in P_2} |a_i|,\ldots,1+\sum\limits_{i\in P_t} |a_i| \right\}.
\]
\end{theorem}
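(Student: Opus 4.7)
The plan is to apply the norm bound $|\lambda|\le \|A\|_\infty$ to a carefully chosen companion matrix $A$ of the auxiliary polynomial $x^q p(x)$, where $q = t-1$. Since every root of $p$ is also a root of $x^q p$, any upper bound on the eigenvalues of such a companion matrix bounds $|\lambda|$. Set $N = n+q = n+t-1$.

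First I would reorder $P_2,\ldots,P_t$ in ascending order of their maxima; this leaves $B_P$ unchanged, since those blocks contribute symmetrically to the maximum defining $B_P$. The hypothesis $\max P_1 \le n-t$, combined with a pigeonhole count (the $t-\ell$ elements of $\{n-t+\ell,\ldots,n-1\}$ cannot supply representatives to $t-\ell+1$ distinct blocks), upgrades this to the stronger condition $\max P_\ell \le n-t+\ell-1$ for every $\ell\in\{1,\ldots,t\}$.

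Next I would build the $N\times N$ lower Hessenberg matrix $A$ given by $A_{i,i+1}=1$ for $i=1,\ldots,N-1$ together with $A_{N-\ell+1,\,i+t-\ell+1} = -a_i$ for each $\ell\in\{1,\ldots,t\}$ and $i\in P_\ell$ (other entries zero). The chosen positions put $-a_i$ on subdiagonal $n-1-i$, so the subdiagonal pattern is that of a companion matrix for the degree-$N$ polynomial $x^q p$. To verify that $A$ actually has characteristic polynomial $x^q p(x)$, I would apply the standard recurrence
\[
p_k(x) = (x - A_{k,k})\,p_{k-1}(x) - \sum_{j=1}^{k-1} A_{k,j}\,p_{j-1}(x),\qquad p_0 \equiv 1,
\]
for the characteristic polynomials $p_k(x)$ of the leading $k\times k$ principal submatrices of a lower Hessenberg matrix with unit superdiagonal. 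For $k\le N-t$ the $k$-th row has nothing below the superdiagonal, so $p_k(x) = x\,p_{k-1}(x)$ and therefore $p_k(x) = x^k$. For $k = N-t+s$ with $s=1,\ldots,t$ (so $\ell = t-s+1$), an induction on $s$ yields
\[
p_k(x) = x^k + \sum_{i\in P_\ell\cup P_{\ell+1}\cup\cdots\cup P_t} a_i\,x^{i+s-1},
\]
the inductive step substituting $p_{i+s-1}(x) = x^{i+s-1}$ for each $i\in P_\ell$---valid exactly because the reordering ensures $i\le n-s$. Setting $k=N$ (so $s=t$) produces $p_N(x) = x^N + x^q\sum_{i=0}^{n-1} a_i x^i = x^q p(x)$.

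The row sums of $A$ are now immediate: rows $1,\ldots,N-t$ each sum to $1$; row $N-\ell+1$ sums to $1+\sum_{i\in P_\ell}|a_i|$ for $\ell=2,\ldots,t$; and row $N$ sums to $\sum_{i\in P_1}|a_i|$. Hence $\|A\|_\infty = B_P$, so $|\lambda|\le\|A\|_\infty = B_P$. The principal obstacle is the characteristic-polynomial computation: the Hessenberg recurrence telescopes cleanly only because the preparatory reordering guarantees that the recursive polynomials $p_{i+s-1}$ invoked at each step reduce to the monomials $x^{i+s-1}$. Small instances with some $\max P_\ell > n-t+\ell-1$ exhibit cross-terms in the expansion and show that, without reordering, the same placement rule does not produce a companion matrix of $x^q p$.
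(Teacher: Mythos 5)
Your proposal is correct and takes essentially the same route as the paper: reorder $P_2,\ldots,P_t$ by ascending maxima to obtain $\max\{i \mid i\in P_\ell\}\leq n-t+\ell-1$, then place $-a_i$ for $i\in P_\ell$ in row $n+t-\ell$, column $i+t-\ell+1$ of an order-$(n+t-1)$ companion matrix of $x^{t-1}p$ and read off $B_P$ as its $\infty$-norm. The only difference is that you verify the characteristic polynomial from scratch via the Hessenberg determinant recurrence, whereas the paper simply checks that the placements land in the block $K$ on the correct subdiagonals and invokes the characterization of unit sparse companion matrices (Theorem~\ref{tsparse}) to conclude the matrix is a companion matrix of $x^{t-1}p$.
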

\begin{proof}  
Relabel the  partition $P = \{ P_1,P_2,\ldots,P_t \}$ of $\{ 0,1,\ldots,n-1 \}$
with $\max\{i\ \vert\ i \in P_1\} \leq n-t$ so that
$\max\{i\ \vert\ i\in P_v \} > \max\{i\ \vert\ i\in P_u \}$ for all $v > u > 1$. Then $n-1\in P_t$, $n-2\in P_t\cup P_{t-1}$, and inductively, 
$n-r \in P_t\cup P_{t-1} \cup \cdots \cup P_{t-r+1}$ for $1\leq r\leq n-1$. In particular,
if $i\in P_j$ then $i\leq n+j-t-1$. 

We can construct an order $n+t-1$ companion matrix $C=C(x^{t-1}p)$ in the form (\ref{sparseForm}) with
$K$ a $t\times n$ submatrix. In particular, the order of $C$ is sufficiently large so that if $i\in P_j$ then $a_i$
can be in row $n+t-j$ of $C$: place $a_i$ in position $(n+t-j, t+i-j+1)$ so
that $a_i$ is on the $(n-i-1)$th subdiagonal of $C$. 
Note that $n\leq n+t-j\leq n+t-1$ since $1\leq j\leq t$, and $1\leq t+i-j+1\leq n$ since $1\leq j \leq t$ and $i\leq n+j-t-1$,
so that $a_i$ is in submatrix $K$. Now $B_p=\|C\|_\infty$.
\end{proof}

With the partition relabeling as noted in the above proof, we can observe that $a_{n-1}\in P_t$. Consequently
the proof used $C=C(x^qp)$ with $q=t-1$ in order to ensure there are sufficient columns so that $a_0$ could
appear in the same row as $a_{n-1}$ while satisfying the diagonal conditions of
the matrix $K$ in $(\ref{sparseForm})$.  

Theorem~\ref{extendedBound} is a generalization of the four bounds in $(\ref{4bounds})$, including the Frobenius bound $N(\F)$, as well as the bounds in \cite[Theorem 3.3]{DDP}. Particularly, the two portions 
of the Frobenius bound are obtained from $\|C\|_\infty$ with
 $C=C(x^{t-1}p)$, using $t=n$ and $t=1$ respectively.

\begin{example}\label{bigPartitionEx}
Let $p = x^8 - x^7 - 3x^6 - 2x^4 - 2x^4 - 4x^3 - 3x^2 - 5x - 3$, and let $\lambda$ be a root of $p$. Then $N(\F(p)) = 6$, and by Theorem~$\ref{aNoughtLess1}$, $N(\F(p)) \leq N(C(p))$ for all unit sparse companion matrices of $p$.

Consider the partition: $P = \{ \{ 0,7 \}, \{ 1 \}, \{ 2 \}, \{ 3 \}, \{ 4 \}, \{ 5 \}, \{ 6 \} \}$. Using the notation in the proof of Theorem~$\ref{extendedBound}$, $t = 7$ and $n-t = 1$. Since $\max\{ \{ 1 \} \} \leq 1$, $P_1 = \{ 1 \}$. Then
\begin{align*}
B_P &= \max\left\{ |a_1|,1+|a_2|,1+|a_3|,1+|a_4|,1+|a_5|,1+|a_6|,1+|a_0|+|a_7| \right\} \\
&= \max\left\{ 5,4,5,3,3,4,5 \right\} = 5.
\end{align*}
Equivalently,  $B_P =\|C(x^6p)\|_\infty$ for
\[ C(x^6p) = \left[
\begin{array}{cccccccc|cccccc}
 0 & 1 & 0 & 0 & 0 & 0 & 0 & 0 & 0 & 0 & 0 & 0 & 0 & 0 \\
 0 & 0 & 1 & 0 & 0 & 0 & 0 & 0 & 0 & 0 & 0 & 0 & 0 & 0 \\
 0 & 0 & 0 & 1 & 0 & 0 & 0 & 0 & 0 & 0 & 0 & 0 & 0 & 0 \\
 0 & 0 & 0 & 0 & 1 & 0 & 0 & 0 & 0 & 0 & 0 & 0 & 0 & 0 \\
 0 & 0 & 0 & 0 & 0 & 1 & 0 & 0 & 0 & 0 & 0 & 0 & 0 & 0 \\
 0 & 0 & 0 & 0 & 0 & 0 & 1 & 0 & 0 & 0 & 0 & 0 & 0 & 0 \\ \cdashline{7-14}
 0 & 0 & 0 & 0 & 0 & 0 &\lline{0} & 1 & 0 & 0 & 0 & 0 & 0 & 0 \\  \hline
 3 & 0 & 0 & 0 & 0 & 0 &\lline{0} & 1 & 1 & 0 & 0 & 0 & 0 & 0 \\
 0 & 0 & 0 & 0 & 0 & 0 &\lline{0} & 3 & 0 & 1 & 0 & 0 & 0 & 0 \\
 0 & 0 & 0 & 0 & 0 & 0 &\lline{0} & 2 & 0 & 0 & 1 & 0 & 0 & 0 \\
 0 & 0 & 0 & 0 & 0 & 0 &\lline{0} & 2 & 0 & 0 & 0 & 1 & 0 & 0 \\
 0 & 0 & 0 & 0 & 0 & 0 &\lline{0} & 4 & 0 & 0 & 0 & 0 & 1 & 0 \\
 0 & 0 & 0 & 0 & 0 & 0 &\lline{0} & 3 & 0 & 0 & 0 & 0 & 0 & 1 \\
 0 & 0 & 0 & 0 & 0 & 0 &\lline{0} & 5 & 0 & 0 & 0 & 0 & 0 & 0
\end{array} \right]. \]
\end{example}

In Example~\ref{bigPartitionEx}, $t$ is large relative to $n$. In general, it is good to take 
$t$ as large as possible in order to reduce the size of the parts in the partition, since the sums in $B_P$ are minimized if the parts are small. However, if we intend for the bound $B_P$ to be sharper than a Frobenius bound, 
then the coefficients with maximum modulus need to be in $P_1$, which limits $t$. In particular, 
if $M=\max\{ |a_k|\ \vert\ 1\leq k\leq n-1\}$ and $|a_i|=M$, then we would take $t\leq n-i$.
\begin{example}\label{littlePartitionEx}
Let $p = x^8 - x^7 - 3x^6 - x^5 - 6x^4 - x^3 - 5x^2 - 3x - 1$, and let $\lambda$ be a root of $p$. Then $|\lambda|\leq N(\F(p)) = 7$, and by Theorem~$\ref{aNoughtLess1}$, $N(\F(p)) \leq N(C(p))$ for all unit sparse companion matrices $C(p)$.

Consider the partition: $P = \{ \{ 4 \}, \{ 2 \}, \{ 0,5,6 \}, \{ 1,3,7 \} \}$. With this partition, $t = |P| = 4$ and $n-t = 4$. 
Thus, by Theorem~$\ref{extendedBound}$, we could choose $P_1$ to be $\{2\}$ or $\{4\}$. With
$P_1 = \{ 4 \}$, we have
\begin{align*}
B_P &= \max\left\{ |a_4|,1+|a_2|,1+|a_0|+|a_5|+|a_6|,1+|a_1|+|a_3|+|a_7| \right\} 
=\max\left\{ 6,6,6,6 \right\} = 6<7,
\end{align*}
and $|\lambda| \leq 6$.
\end{example}

The restrictions on $|a_0|$ provided by Theorem \ref{aNoughtLess1} and Corollary \ref{MIsLimited}
do not apply to $C(x^{q}p)$, $q\geq 1$, as they do to $C(p)$. For instance, Example~\ref{littlePartitionEx} 
illustrates that there is a unit sparse companion matrix $C$ with $N(C(x^4p))<N(\F(p))=N(\F(x^4p))$ 
but $|a_0|=1$. 
However, $C(p)$ can be replaced by $C(x^{q}p)$ in 
Theorems \ref{maxPos} and \ref{twoIsBestRatio} without changing the conclusion. 
In particular, $N(\F(p))<2N(C(x^{q}p))$ and $N(\F(p))-N(C(x^{q}p))\leq 1$.

\section{Lower Bounds Using Monic Reversal Polynomials}\label{secReversal}
One tool for obtaining lower bounds on the roots of a polynomial is to apply the techniques previously developed to the monic reversal polynomial,
as shown in \cite{HJ}. Assuming that $a_0 \neq 0$, the \emph{monic reversal polynomial} of $p$ is 
\begin{equation}\label{reversalPoly}
p^{\sharp}(x) = \dfrac{x^n}{a_0} p\left(x^{-1}\right) = x^n + \dfrac{a_1}{a_0} x^{n-1} + \dfrac{a_2}{a_0} x^{n-2} + \cdots + \dfrac{a_{n-1}}{a_0} x + \dfrac{1}{a_0}.
\end{equation}
In the case that $a_0 = 0$, one could consider the monic reversal of the lower degree polynomial $p/x$ if $a_{1}\neq 0$. 
Note that 
$ (p^\sharp)^\sharp = p.$
Since the roots of $p^{\sharp}$ are the reciprocals of the roots of $p$, the eigenvalues of a companion matrix $C(p^{\sharp})$ are the reciprocals of the roots of $p$. Therefore, if $\lambda$ is a root of $p$,
\begin{equation}\label{lower}
\dfrac{1}{\|C(p^\sharp)\|} \leq |\lambda| 
\end{equation}
for any submultiplicative matrix norm. Including (\ref{bound}), 
we have 
\begin{equation}\label{lowerBoundReversal}
\dfrac{1}{N(C(p^\sharp))} \leq |\lambda| \leq N(C(p)).
\end{equation}
Thus, the theorems from Sections \ref{secStraightNonFied} and \ref{secFLshape} 
can be applied to companion matrices of monic reversal polynomials to obtain lower bounds. For example, by Theorem~\ref{aNoughtLess1}, if $N(C(p^\sharp)) < N(\F(p^\sharp))$ then $|a_0| > 1$.

\begin{example}
Let $\lambda$ be a root of \[ p = x^6 -3x^5 -10x^4 - 4x^3 -5x^2 -x -5. \] 
Note that \[ p^\sharp = x^6 +\frac{1}{5}x^5 +x^4 +\frac{4}{5}x^3 +2x^2 +\frac{3}{5}x -\frac{1}{5}. \] The Frobenius matrix provides a lower bound, $|\lambda| \geq N(\F(p^\sharp))^{-1} = 1/3$, but the companion matrix
\[ C(p^\sharp) = \left[ \begin{array}{ccc|ccc}
0   & 1   & 0   & 0   & 0   & 0   \\
0   & 0   & 1   & 0   & 0   & 0   \\ \hline
0   & 1   & -\nicefrac{1}{5} & 1   & 0   & 0   \\
0   & -\nicefrac{4}{5} & 0   & 0   & 1   & 0   \\
-\nicefrac{3}{5} & 0   & 0   & 0   & 0   & 1   \\
\ \nicefrac{1}{5} & 0   & -2   & 0   & 0   & 0
\end{array} \right], \]
provides a lower bound of $|\lambda| \geq N(C(p^\sharp))^{-1} = 5/11.$
\end{example}

The next theorem implies that of all the unit sparse companion matrices of $p$, the Frobenius matrix will provide either the sharpest lower bound, the sharpest upper bound, or both.

\begin{theorem}\label{chooseUpperOrLower}
Let $C(p)$ and $C(p^\sharp)$ be unit sparse companion matrices for polynomials of the form \textnormal{(\ref{poly})} and \textnormal{(\ref{reversalPoly})} respectively. Let $\F(p)$ and $\F(p^\sharp)$ be the corresponding Frobenius companion matrices. 
\begin{enumerate}[label=(\roman*)]
\item \label{choose&UpperLoses} If $|a_0| \geq 1$ then $N(\F(p)) \leq N(C(p))$.
\item \label{choose&LowerLoses} If $|a_0| \leq 1$ then $N(\F(p^\sharp)) \leq N(C(p^\sharp))$.
\end{enumerate}
\end{theorem}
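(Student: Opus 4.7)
The plan is to observe that both parts are essentially immediate contrapositives of Theorem~\ref{aNoughtLess1}, once one tracks carefully what plays the role of ``$a_0$'' in each case.

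For part~\ref{choose&UpperLoses}, I would argue by contradiction: assume $N(C(p)) < N(\F(p))$. Theorem~\ref{aNoughtLess1} (applied directly to $p$) then forces $|a_0| < 1$, which contradicts the hypothesis $|a_0| \geq 1$. Hence $N(\F(p)) \leq N(C(p))$. This handles (i) in one line.

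For part~\ref{choose&LowerLoses}, the idea is to apply Theorem~\ref{aNoughtLess1} to the polynomial $p^\sharp$ instead of to $p$. From the explicit expression (\ref{reversalPoly}), the constant term of $p^\sharp$ is $1/a_0$, so ``$|a_0|$ in Theorem~\ref{aNoughtLess1}'' becomes $|1/a_0|$ in the reversed setting. If one supposes $N(C(p^\sharp)) < N(\F(p^\sharp))$, then Theorem~\ref{aNoughtLess1} yields $|1/a_0| < 1$, i.e., $|a_0| > 1$, contradicting $|a_0| \leq 1$. (Note that $a_0 \neq 0$ is already implicit since $p^\sharp$ must exist for the statement to make sense.) Thus $N(\F(p^\sharp)) \leq N(C(p^\sharp))$.

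There is no real obstacle here; the only subtlety worth mentioning explicitly in the write-up is the identification of the constant term of $p^\sharp$ as $1/a_0$, so that the hypothesis $|a_0|\leq 1$ in (ii) corresponds to the hypothesis ``constant term has modulus at least $1$'' needed to invoke the contrapositive of Theorem~\ref{aNoughtLess1}. The whole proof should be just a few lines, mirroring the two cases symmetrically.
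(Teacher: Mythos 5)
Your proposal is correct and follows essentially the same route as the paper: part (i) is the contrapositive of Theorem~\ref{aNoughtLess1}, and part (ii) applies that theorem to $p^\sharp$, whose constant term is $1/a_0$. Your explicit remark that $|1/a_0|<1$ translates to $|a_0|>1$ is exactly the identification the paper leaves implicit.
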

\begin{proof}
Part \ref{choose&UpperLoses} is the contrapositive of Theorem \ref{aNoughtLess1}. Part \ref{choose&LowerLoses} is an application of Theorem \ref{aNoughtLess1} to the lower bound in (\ref{lowerBoundReversal}).
\end{proof}

\begin{example}
Let $\lambda$ be a root of $$p = x^7 -4x^6 -x^5 -2x^4 -6x^3 -5x^2 -2x -0.2.$$ The Frobenius matrix will give the upper bound  $|\lambda|\leq N(\F)=7$. Using the companion matrix
\[ \small{
C = \left[ \begin{array}{rccc|ccc}
0   & 1 & 0 & 0 & 0 & 0 & 0 \\
0   & 0 & 1 & 0 & 0 & 0 & 0 \\
0   & 0 & 0 & 1 & 0 & 0 & 0 \\ \hline
0   & 0 & 1 & 4 & 1 & 0 & 0 \\
5   & 0 & 0 & 0 & 0 & 1 & 0 \\
2   & 0 & 0 & 2 & 0 & 0 & 1 \\
0.2 & 0 & 0 & 6 & 0 & 0 & 0 \\
\end{array} \right], }
\]
we can improve the Frobenius bound to $|\lambda|\leq N(C)=6.2$.
Since $|a_0| = 0.2 < 1$, Theorem $\ref{chooseUpperOrLower}  \ref{choose&LowerLoses}$
implies that the best lower bound will be obtained from the Frobenius matrix
of the monic reversal  polynomial
\[p^\sharp = x^7 +10x^6 +25x^5 +30x^4 + 10x^3 +5x^2 +20x -5.\]
In particular, using unit sparse companion matrices of $p$, the best lower bound 
we can obtain from $(\ref{lowerBoundReversal})$ is $$|\lambda|\geq N(\F(p^\sharp))=\frac{1}{31}.$$
\end{example}

\begin{corollary}\label{aNoughtFrobDoubleSharp}
Let $\lambda$ be a root of $p$.
If $|a_0| = 1$, of all the unit sparse companion matrices of $p$,
the Frobenius companion matrix provides both 
the sharpest lower bound and the sharpest upper bound on $\lambda$ in $(\ref{lowerBoundReversal})$.
\end{corollary}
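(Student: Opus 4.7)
The plan is to observe that the hypothesis $|a_0|=1$ places us simultaneously on the boundary of both cases of Theorem~\ref{chooseUpperOrLower}. Since $|a_0|=1$ implies both $|a_0|\geq 1$ and $|a_0|\leq 1$, parts \ref{choose&UpperLoses} and \ref{choose&LowerLoses} of Theorem~\ref{chooseUpperOrLower} can be invoked concurrently, one for the upper bound and the other for the lower bound in $(\ref{lowerBoundReversal})$.

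First I would dispatch the upper bound by applying part \ref{choose&UpperLoses}: because $|a_0|\geq 1$, we get $N(\F(p))\leq N(C(p))$ for every unit sparse companion matrix $C(p)$ of $p$. Thus the Frobenius upper bound on $|\lambda|$ from $(\ref{lowerBoundReversal})$ is no larger than (hence at least as sharp as) the upper bound produced by any other unit sparse companion matrix of $p$. Next I would handle the lower bound by applying part \ref{choose&LowerLoses}: because $|a_0|\leq 1$, we have $N(\F(p^\sharp))\leq N(C(p^\sharp))$ for every unit sparse companion matrix $C(p^\sharp)$ of the monic reversal polynomial. Taking reciprocals reverses the inequality to give $1/N(\F(p^\sharp))\geq 1/N(C(p^\sharp))$, so the Frobenius-derived lower bound dominates, and is therefore the sharpest, among all lower bounds of the form in $(\ref{lowerBoundReversal})$ coming from unit sparse companion matrices.

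I do not anticipate any real obstacle: the corollary is essentially a boundary observation that exploits the fact that the equality case $|a_0|=1$ sits in the intersection of the two hypotheses of Theorem~\ref{chooseUpperOrLower}. The only point requiring a moment of care is the sign convention — ``sharpest'' means \emph{smallest} for an upper bound but \emph{largest} for a lower bound — so one must remember to invert the inequality when transferring $N(\F(p^\sharp))\leq N(C(p^\sharp))$ across the reciprocal in $(\ref{lowerBoundReversal})$.
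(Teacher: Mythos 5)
Your argument is correct and is exactly the intended derivation: the paper states this corollary without an explicit proof precisely because it follows immediately from applying both parts of Theorem~\ref{chooseUpperOrLower} at the boundary case $|a_0|=1$, just as you do. Your added remark about reversing the inequality across the reciprocal for the lower bound is the only nontrivial point, and you handle it correctly.
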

Note that the converse of Corollary~\ref{aNoughtFrobDoubleSharp} is not true:

\begin{example}
Consider the polynomial \[p_6 = x^6 -x^5 -x^4 -x^3 -2x^2 -2x -0.2.\] 
By Corollary $\ref{MIsLimited}$, $N(\F) \leq N(C)$ for any unit sparse companion matrix of $p$ since $(u-1)M = 2 \geq 0.8 = 1-|a_0|$. Since $|a_0| = 0.2 < 1$, by Theorem \ref{chooseUpperOrLower} \ref{choose&LowerLoses},  the Frobenius matrix also provides the sharpest lower bound of all the unit sparse companion matrices of $p$.
\end{example}

As in the previous section, we note that Theorem~\ref{twoIsBestRatio} still applies: in particular, while 
$ N(C(p^\sharp))$ can be a better bound than $N(\F(p^\sharp))$, it will not be an improvement of more
than a factor of two.
 
\section{Bounds Using Inverse Matrices and Monic Reversal Polynomials}\label{secInverseReversals}
If $\lambda$ is a root of $p$ and $C$ is a unit sparse companion matrix of $p$, it follows that
\[ \dfrac{1}{|\lambda|} \leq \|C^{-1}\| \]
for any submultiplicative matrix norm. Using this observation with the monic reversal polynomial, we obtain
the upper and lower bounds:
\[ 
\dfrac{1}{N(C^{-1}(p))} \leq |\lambda| \leq N(C^{-1}(p^\sharp)).  
\]

Given $0\leq c\leq n-1$ and $a_0\neq 0$, 
then a unit sparse companion matrix can be partitioned as 
\begin{equation}\label{eC}
C=C(p) = \left[ \renewcommand{\arraystretch}{1.3}
\begin{array}{c|c|cc}
\multirow{2}{*}{$O$} & \multirow{2}{*}{$I_c$} & \multirow{2}{*}{$O$} \\
& & \\ \hline
\multirow{2}{*}{$\textbf{u}$} & \multirow{2}{*}{$H$} & \multirow{2}{*}{$I_{n-c-1}$} \\
& & \\ \hline
-a_0 & \textbf{y}^T & O \\
\end{array} \right],
\end{equation}
for some vectors $\mathbf{u}$ and $\mathbf{y}$, and some $(n-c-1)\times c$ matrix $H$, whose nonzero entries are coefficients of $-p$. 
In this case, the inverse of $C$ will be 
\begin{equation}\label{eC2}
C^{-1} = \left[ \renewcommand{\arraystretch}{1.8}
\begin{array}{c|c|c}
{\frac{1}{a_0}\textbf{y}^T} & O & -\frac{1}{a_0}  \\ \hline
\multirow{2}{*}{$I_c$} & \multirow{2}{*}{$O$} & \multirow{2}{*}{$O$} \\
 & & \\ \hline
\multirow{2}{*}{$\frac{1}{-a_0}\textbf{u}\textbf{y}^T-H$} & \multirow{2}{*}{$I_{n-c-1}$} & \multirow{2}{*}{$\frac{1}{a_0}\textbf{u}$} \\
 & & \\
\end{array} \right]. 
\end{equation}
As noted in~\cite{DDP}, for the Frobenius matrix $\F$,  $\F(p^\sharp)^{-1}$ is equivalent to $\F(p)$. Hence we will continue to compare new bounds to $N(\F)$.

Due to the number of nonequivalent unit sparse companion matrices, 
determining a simple expression for the $1$-norm and the $\infty$-norm of 
the inverse of a companion matrix is not straightforward. In order to simplify matters, 
we will restrict our attention to those with 
$\textbf{u} = \textbf{0}$. In particular, for $1\leq c\leq n-2$, we say a matrix is of \emph{type} $E_c=E_c(p)$ if it
has the form (\ref{eC}) with $\mathbf{u}=\mathbf{0}$. In this case, $\mathbf{y}_1=-a_1$.
Further, we say a matrix $A$ is of \emph{type} $E_c(p^\sharp)^{-1}$ if $A^{-1}$ is of type $E_c(p^\sharp)$. 
In this case, $\mathbf{y}_1 =-a_{n-1}$.

There is only one matrix of type $E_1(p^\sharp)^{-1}$, namely  
\begin{equation}\label{superFiedler}
W =
\left[ \renewcommand{\arraystretch}{1.3}
\begin{array}{c|c|c}
-a_{n-1}        & O			                 & -a_0                          \\ \hline
1               & O 			             &   0                          \\ \hline
{a_1}/{a_0}     & \multirow{4}{*}{$I_{n-2}$} & \multirow{4}{*}{$O$} \\
{a_2}/{a_0}     &                            &                               \\
\vdots          &                            &                               \\
{a_{n-2}}/{a_0} &                            &                               \\
\end{array} \right].
\end{equation}
(The matrix $W$ is equivalent to a matrix labeled $F(p^\sharp)^{-1}$ in \cite{DDP}.) 
When restricting to matrices $A$ of type $E_c(p^\sharp)^{-1}$ obtained from Fiedler matrices, 
\cite[Theorem~5.4]{DDP} compares $\|W\|_\infty$ and $\|\F\|_\infty$
to $\|A\|_\infty$ under various conditions. 
Theorem~\ref{bestEasilyInverted} expands the comparison to all matrices of type $E_{c}(p^\sharp)^{-1}$, including
those obtained from the larger class of unit sparse companion matrices, and once again  
highlights $W$ when $|a_0|>1$. 
Later, in Theorem~\ref{superFiedlerIff}, 
we characterize when bounds derived from $W$ improve those derived from $\F$. 

\begin{theorem}\label{bestEasilyInverted}
Let $M=\max\{ |a_k|\ \vert\ 1\leq k\leq n-1\}$.
Suppose $A$ is of type $E_c(p^\sharp)^{-1}$ for some $c$, $1\leq c\leq n-2$.
\begin{enumerate}[label=\normalfont{\alph*)}]
\item\label{aNought=0} If $|a_0|=1$ then $N(\F) \leq N(A)$. 

\item\label{superFiedlerIsBest} If $|a_0|>1$ then $\|W\|_\infty \leq \|A\|_\infty$. 

\item\label{FrobeniusIsBest} If $|a_0|>1$ and $|a_{n-1}|=M$ then $N(\F)\leq N(A)$.

\item\label{aNought<0} If $|a_0|<1$ then 
\begin{enumerate}[label=\normalfont{\roman*)}]
\item\label{diff1} $N(\F)-\|A\|_{\infty} \leq 1$,
\item\label{ratio2}  $N(\F)\leq 2 \|A\|_{\infty}$, and 
\item $N(\F) \leq \|A\|_{1}$.
\end{enumerate}
\end{enumerate}

\end{theorem}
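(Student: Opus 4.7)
The plan is to expand $A$ using formula (\ref{eC2}) with $\mathbf{u}=\mathbf{0}$ applied to $p^\sharp$, so that one can directly read off the row sums and column sums of $A$ in terms of the moduli $|a_k|$, and compare them against the explicit formulas $\|\F\|_\infty=\max\{1,\sum_{k=0}^{n-1}|a_k|\}$ and $\|\F\|_1=\max\{|a_0|,1+M\}$. The crucial structural fact I would establish first is that the subdiagonal placement rule of Theorem~\ref{tsparse}, combined with $\mathbf{u}=\mathbf{0}$, forces $\mathbf{y}_1=-a_{n-1}/a_0$; hence in $A$ the entry $(1,1)$ is always $-a_{n-1}$, the entry $(1,n)$ is $-a_0$, and each remaining coefficient $a_k$ (for $1\le k\le n-2$) appears in $A$ exactly once, with magnitude $|a_k|$ if it lives in row~$1$ (columns $2$ through $c$, from $\mathbf{y}$) or $|a_k|/|a_0|$ if it lives in the $-H$ block (rows $c+2$ through $n$, columns $1$ through $c$). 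Every subsequent estimate is just a row-sum or column-sum lower bound read off from this template.

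Part~(a) is the easiest: with $|a_0|=1$ the two placements yield the same magnitude $|a_k|$, so for any $k^*\in\{1,\dots,n-1\}$ with $|a_{k^*}|=M$, the row and the column of $A$ containing $a_{k^*}$ each pick up an extra $1$ from an $I_c$, $I_{n-c-1}$, or the $(1,n)=-a_0$ entry, yielding $N(A)\ge 1+M\ge\|\F\|_1\ge N(\F)$. Part~(b) asks for $\|W\|_\infty\le\|A\|_\infty$ when $|a_0|>1$; I would first compute $\|W\|_\infty=\max\{|a_0|+|a_{n-1}|,\,1+\max_{1\le k\le n-2}|a_k|/|a_0|\}$ from (\ref{superFiedler}), then note that row~$1$ of any such $A$ automatically sums to at least $|a_0|+|a_{n-1}|$; for the second summand, pick $k^*$ achieving $\max_{1\le k\le n-2}|a_k|$ and case-split on whether $a_{k^*}$ sits in row~$1$ or in $-H$, showing that in each case some row of $A$ sums to at least $1+|a_{k^*}|/|a_0|$ (using $|a_0|\ge 1$ in the row-$1$ sub-case).

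Part~(c) layers part~(b) with a $1$-norm estimate. Column~$n$ of $A$ consists only of $-a_0$, so $\|A\|_1\ge|a_0|$, while column~$1$ contains $(1,1)=-a_{n-1}$ and $(2,1)=1$, so under $|a_{n-1}|=M$ we get $\|A\|_1\ge 1+M$; these combine to $\|A\|_1\ge\max\{|a_0|,1+M\}=\|\F\|_1$. Together with $\|A\|_\infty\ge\|W\|_\infty\ge|a_0|+|a_{n-1}|=|a_0|+M\ge\|\F\|_1$ from part~(b), this yields $N(A)\ge N(\F)$. The hypothesis $|a_{n-1}|=M$ enters precisely to boost the $(1,1)$-magnitude up to the $M$ needed in the column-$1$ bound.

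Part~(d) splits on where the magnitude-$M$ coefficient $a_{k^*}$ sits in $A$. For~(iii), whichever column houses $a_{k^*}$ also contains an $I_c$ diagonal $1$ (or the $(2,1)=1$ if $k^*=n-1$), and because $|a_0|<1$ a $-H$ placement only inflates the magnitude from $|a_{k^*}|$ to $|a_{k^*}|/|a_0|\ge M$, so in every case $\|A\|_1\ge 1+M=\|\F\|_1$. For~(i) and~(ii), if $a_{k^*}$ is in $-H$ then $\|A\|_\infty\ge 1+M/|a_0|>1+M\ge N(\F)$ and both inequalities are immediate; if $a_{k^*}$ is in row~$1$ (including the forced case $k^*=n-1$), then row~$1$ contributes $\|A\|_\infty\ge|a_0|+M$, giving~(i) as $N(\F)-\|A\|_\infty\le 1-|a_0|<1$ and~(ii) via a short split on $M\le 1$ versus $M>1$ (using the trivial $\|A\|_\infty\ge 1$ in the first subcase and $\|A\|_\infty\ge|a_0|+M$ in the second). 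The main bookkeeping hurdle throughout is managing the dichotomy of whether $a_{k^*}$ sits in row~$1$ or in the $-H$ block, since the two placements produce magnitudes $|a_{k^*}|$ and $|a_{k^*}|/|a_0|$ which interact differently with the three regimes $|a_0|=1$, $|a_0|>1$, and $|a_0|<1$.
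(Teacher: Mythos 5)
Your proposal is correct and follows essentially the same route as the paper: write out the explicit block form of $A$ (with $-a_{n-1}$ forced at $(1,1)$, $-a_0$ at $(1,n)$, and each remaining $a_k$ appearing once with magnitude $|a_k|$ in row $1$ or $|a_k|/|a_0|$ in the $-H$ block), then read off row/column sums and compare against $\|\F\|_1=\max\{|a_0|,1+M\}$ and $\|\F\|_\infty=\max\{1,\sum_k|a_k|\}$. The only differences are cosmetic reorganizations of the case analysis in part d) (the paper derives (ii) directly from (i) using $\|A\|_\infty\geq 1$, and argues (i) by locating all modulus-$M$ coefficients rather than a single one), so no gap.
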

\begin{proof}
Suppose $1\leq c\leq n-2$ and  $A$ is of type $E_c(p^\sharp)^{-1}$. Then
\[ A = \left[ \begin{array}{cc|cc|c}
\multicolumn{2}{c|}{a_0\textbf{y}^T} & \multicolumn{2}{|c|}{O} & -a_0 \\ \hline
\multicolumn{2}{c|}{\multirow{2}{*}{$I_c$}} & \multicolumn{2}{|c|}{\multirow{2}{*}{$O$}} & \multirow{2}{*}{$O$} \\
& & & & \\ \hline
\multicolumn{2}{c|}{\multirow{2}{*}{$-H$}} & \multicolumn{2}{|c|}{\multirow{2}{*}{$I_{n-c-1}$}} & \multirow{2}{*}{$O$} \\
& & & & \\
\end{array} \right]. \]
for some $\mathbf{y}$, and some $(n-c-1)\times c$ matrix $H$, whose nonzero entries are  
in $\left\{\frac{a_1}{a_0},\frac{a_2}{a_0},\ldots,\frac{a_{n-2}}{a_0}\right\}$, with $a_0\mathbf{y}_1=-a_{n-1}$.
\begin{enumerate}[label=\alph*)]

\item Suppose $|a_0|=1$. Then there is a $1$ in every row and column of $A$. Also, the nonzero values in $H$ are coefficients of $p$. 
Therefore $N(A) \geq 1+M$. Since $|a_0|=1$,
\begin{align*}
N(\F) &= \min\left\{ \max\left\{ |a_0|,1+M \right\}, \max\left\{ 1,|a_0|+|a_1|+\cdots+|a_{n-1}| \right\} \right\} \\
&= 1+M.
\end{align*}
Therefore $N(\F) = 1+M \leq N(A)$.

\item Suppose $|a_0|>1$. Note that $\|W\|_\infty = \max\left\{ |a_0|+|a_{n-1}|,1+ \dfrac{M}{|a_0|} \right\}$. If $|a_0|+|a_{n-1}| \geq 1+ \dfrac{M}{|a_0|}$ then  
$\|W\|_\infty = |a_0|+|a_{n-1}| 
\leq \|A\|_\infty$. 
Suppose $\|W\|_\infty= 1+ \dfrac{M}{|a_0|}$. 
If there is a coefficient of $p$ with modulus $M$ in the first row of $A$ then $\|A\|_\infty \geq 1+M> \|W\|_\infty$.
If there is a coefficient of $p$ with modulus $M$ in $H$, then $\|A\|_\infty \geq 1+ \frac{M}{|a_0|} = \|W\|_\infty$.
Therefore  $\|W\|_\infty \leq \|A\|_\infty$.

\item Suppose $|a_0|>1$ and $|a_{n-1}|=M$. Note that $\|A\|_1 \geq |a_0|$ and $\|A\|_\infty \geq |a_0|+|a_{n-1}|$. Since $|a_{n-1}|=M$,  $\|A\|_1 \geq 1+M$ and $\|A\|_\infty \geq |a_0|+M$. Since $|a_0|>1$, $N(\F) = \max \left\{ |a_0|,1+M \right\}$. Therefore $N(\F) \leq N(A)$.

\item Suppose $|a_0|<1$.
\begin{enumerate}[label=\normalfont{\roman*)}]
\item  
Suppose $\|A\|_\infty < N(\F)$. Since $|a_0|<1$,
\begin{align*}
N(\F) &= \min\left\{ \max\left\{ |a_0|,1+M \right\}, \max\left\{ 1,|a_0|+|a_1|+\cdots+|a_{n-1}| \right\} \right\}\\
&= \min\left\{ 1+M,\max\left\{ 1,|a_0|+|a_1|+\cdots+|a_{n-1}| \right\} \right\}. \\
\intertext{However, if $1>|a_0|+|a_1|+\cdots+|a_{n-1}|$ then $N(\F) = 1 \leq \|A\|_\infty$, so}
N(\F) &= \min\left\{ 1+M,|a_0|+|a_1|+\cdots+|a_{n-1}| \right\}.
\end{align*}
In order for $\|A\|_\infty < N(\F)$, all coefficients of $p$ with modulus $M$ must be in the first row of $A$, since $1+M < 1+\dfrac{M}{|a_0|}$ when $|a_0|<1$. This gives
$\|A\|_\infty \geq  |a_0|+uM$, with  $u = \vert\{ k \ \vert \ M=|a_k| \}\vert$.
Therefore, since $N(\F) \leq 1 + M$, 
$N(\F) - \|A\|_\infty \leq 1+M-|a_0|-uM \leq 1$. 

\item Using part (i), we have
$$\dfrac{N(\F)}{\|A\|_\infty} \leq 1+ \dfrac{1}{\|A\|_\infty}. $$
However, $\|A\|_\infty \geq 1$, so  $N(\F)\leq 2\|A\|_\infty$.

\item  
Since $|a_0|<1$, $\|A\|_1 \geq 1+M$. Recall that \[N(\F) = \min\left\{ 1+M, |a_0| + |a_1| + \cdots + |a_{n-1}| \right\} \leq 1+M.\] Therefore
$ N(\F) \leq \|A\|_1. $
\end{enumerate}
\end{enumerate}\end{proof}

Observe that the conditions given in Theorem~\ref{bestEasilyInverted}\ref{aNought<0}\ref{diff1} and \ref{ratio2} mimic conditions given in Theorem~\ref{twoIsBestRatio}. These conditions can also be applied to the polynomials $x^qp$ in Section~\ref{secExtendedPoly} and the lower bounds in Section~\ref{secReversal}. However, 
when $|a_0|>1$, it is possible that $N(\F)>2\|W\|_\infty$ and $N(\F)-\|W\|_\infty>1$. In fact,
as we note in Remark~\ref{large}, the ratio can be made arbitrarily large.

\begin{example}
Let $\lambda$ be a root of $p = x^6 - x^5 + 12x^4 + 6x^3 + 36x^2 + 18x - 6$. $\|W\|_\infty = 7$ and $N(\F) = 37$, giving
\[ N(\F) - \|W\|_\infty = 30 \textnormal{ and } \frac{N(\F)}{\|W\|_\infty} = \frac{37}{7} \doteq 5.3. 
\]
\end{example}

In the next theorem, we characterize when 
$W$ provides a better upper bound on the roots of a polynomial than the Frobenius matrix $\F$.

\begin{theorem}\label{superFiedlerIff}
Given a  polynomial $p$ 
with $M=\max\{ |a_k|\ \vert\ 1\leq k \leq n-1\}$. Then
$\|W\|_\infty < N(\F)$ if and only if $1<|a_0|<1+M-|a_{n-1}|.$
\end{theorem}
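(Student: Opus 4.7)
The plan is to compare $\|W\|_\infty$ with the two candidates $\|\F\|_\infty = \max\{1,\sum_{i=0}^{n-1}|a_i|\}$ and $\|\F\|_1 = \max\{|a_0|,1+M\}$ that determine $N(\F)$, by reading the row sums of $W$ directly from the display \textnormal{(\ref{superFiedler})}. Row~1 contributes $|a_{n-1}|+|a_0|$, row~2 contributes $1$, and rows $3,\ldots,n$ contribute $1+|a_k|/|a_0|$ for $k=1,\ldots,n-2$. Setting $M' := \max\{|a_k| : 1\leq k\leq n-2\}$, this gives the closed form
$$\|W\|_\infty \;=\; \max\bigl\{\,|a_0|+|a_{n-1}|,\ 1+M'/|a_0|\,\bigr\}.$$

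For the ``if'' direction I would assume $1<|a_0|<1+M-|a_{n-1}|$. The right inequality forces $M>|a_{n-1}|$, so $M$ must be attained at some $|a_k|$ with $k\in\{1,\ldots,n-2\}$, giving $M'=M$. Then $|a_0|>1$ makes $1+M/|a_0|<1+M$, while the hypothesis rearranges to $|a_0|+|a_{n-1}|<1+M$, so $\|W\|_\infty<1+M$. On the Frobenius side, $\|\F\|_1=1+M$ (since $|a_0|<1+M$), and $\|\F\|_\infty=\sum|a_i|\geq|a_0|+|a_{n-1}|+M\geq1+M$. Hence $N(\F)\geq 1+M>\|W\|_\infty$.

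For the ``only if'' direction I would argue contrapositively, eliminating each of the ways the characterization could fail. The case $|a_0|=1$ is dispatched by Theorem~\ref{bestEasilyInverted}\ref{aNought=0}, which gives $N(\F)\leq\|W\|_\infty$ directly. If $|a_0|<1$ and $M$ is attained at some $|a_k|$ with $k\in\{1,\ldots,n-2\}$, then $M'=M$ and $1+M'/|a_0|>1+M\geq N(\F)$, a contradiction. If $|a_0|>1$ and $|a_0|\geq 1+M-|a_{n-1}|$, then $|a_0|+|a_{n-1}|\geq 1+M$ forces $\|W\|_\infty\geq\|\F\|_1\geq N(\F)$, again a contradiction.

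The remaining sub-case is where $|a_0|<1$ and $M$ is attained uniquely at $|a_{n-1}|$ (so $M'<M$), and I expect this to be the main technical obstacle. Here both entries inside the maximum for $\|W\|_\infty$ can sit at or below $1+M$ on their own, so a contradiction cannot be squeezed from the $\|\F\|_1$ comparison alone; it must be extracted from the $\|\F\|_\infty$ side via the full coefficient sum, exploiting how $|a_0|<1$ inflates $M'/|a_0|$ to produce the required lower bound on $\|W\|_\infty$. Once that case is closed, the three eliminated alternatives together yield the stated characterization.
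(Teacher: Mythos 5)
Your closed form $\|W\|_\infty=\max\{|a_0|+|a_{n-1}|,\ 1+M'/|a_0|\}$ with $M'=\max\{|a_k| : 1\leq k\leq n-2\}$ is correct, your ``if'' direction is complete, and the three cases you do close in the ``only if'' direction are sound. But the proposal is not a proof: the fourth case ($|a_0|<1$ with $M$ attained only at $|a_{n-1}|$, so $M'<M$) is left open, and the rescue you anticipate there --- extracting $\|W\|_\infty\geq N(\F)$ from the full coefficient sum on the $\|\F\|_\infty$ side --- cannot work, because the statement itself fails in that regime. Take $p=x^3+4x^2+\tfrac12 x+\tfrac12$, so $a_0=a_1=\tfrac12$, $a_2=4$, $M=4$, $M'=\tfrac12$. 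The row sums of $W$ are $4.5$, $1$ and $1+|a_1|/|a_0|=2$, so $\|W\|_\infty=4.5$, while $\|\F\|_\infty=\|\F\|_1=5$, hence $N(\F)=5$. Thus $\|W\|_\infty<N(\F)$ although $|a_0|=\tfrac12<1$, so the ``only if'' implication is false as stated and the missing case of your argument cannot be closed.

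It is worth seeing why the paper's own proof does not notice this: it disposes of $|a_0|\leq 1$ in one line via $\|W\|_\infty\geq 1+M/|a_0|\geq 1+M\geq N(\F)$, relying on the formula $\|W\|_\infty=\max\{|a_0|+|a_{n-1}|,1+M/|a_0|\}$ asserted in the proof of Theorem~\ref{bestEasilyInverted}. That formula uses $M$ where only your $M'$ is justified, since $-a_{n-1}$ sits in the first row of $W$ in \textnormal{(\ref{superFiedler})} and never appears divided by $a_0$. Your more careful bookkeeping is precisely what exposes the discrepancy. The characterization does hold if one adds the hypothesis $|a_{n-1}|<M$ (then $M'=M$ and your three cases exhaust the negation; note the forward implication already forces $|a_{n-1}|<M$ and is unconditionally fine), but without such a hypothesis the remaining case is not a technical obstacle to be overcome --- it is a counterexample.
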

\begin{proof}
Suppose $\|W\|_\infty < N(\F)$. Then
\[ \max\left\{ |a_0|+|a_{n-1}|,1+\dfrac{M}{|a_0|} \right\} < \min\left\{ \max\left\{ |a_0|,1+M \right\}, \max\left\{ 1,|a_0|+|a_1|+\ldots +|a_{n-1}| \right\} \right\}. \]
Suppose $|a_0|\leq 1$. Then $\|W\|_\infty\geq 1+M \geq N(\F)$. Thus $|a_0|>1$. 

Suppose $|a_0| \geq 1+M$. Then $N(\F) = |a_0|$. But $\|W\|_\infty \geq |a_0|+|a_{n-1}|$. 
Therefore $|a_0| < 1+M$ and $N(\F) = 1+M$.
Hence 
$|a_0|+|a_{n-1}| < 1+M.$ 
Therefore
$ 1 < |a_0| < 1 + M-|a_{n-1}|.$

Suppose $1<|a_0|<1+M-|a_{n-1}|$. Since $1 < |a_0|$, $1<|a_0|+|a_1|+\cdots+|a_{n-1}|$ and, in fact, $1+M<|a_0|+|a_1|+\cdots+|a_{n-1}|$. 
Thus $N(\F) = 1+M$, since $|a_0|<1+M$. Therefore
$\|W\|_\infty <  N(\F). $
\end{proof}

\begin{theorem}
Given a  polynomial $p$ with $|a_0|>1$. If $N(\F)<\|W\|_\infty$ then $\|W\|_\infty <2 N(\F)$.
\end{theorem}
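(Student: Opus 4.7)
The plan is a direct two-case analysis on the size of $|a_0|$ relative to $1+M$, using the explicit formula for $\|W\|_\infty$ together with a simplification of $N(\F)$ that is valid whenever $|a_0|>1$. Reading the two nonzero row sums of $W$ gives $\|W\|_\infty=\max\{|a_0|+|a_{n-1}|,\ 1+M/|a_0|\}$, while $\|\F\|_\infty=\max\{|a_0|,1+M\}$ and, since $|a_0|>1$, $\|\F\|_1=|a_0|+|a_1|+\cdots+|a_{n-1}|$.

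A useful preliminary observation is that $\|\F\|_1$ is never the active branch of the min defining $N(\F)$ under the hypothesis $|a_0|>1$: since some $|a_j|$ with $1\leq j\leq n-1$ equals $M$, we have $\|\F\|_1\geq |a_0|+M>1+M\geq\|\F\|_\infty$. Hence $N(\F)=\|\F\|_\infty=\max\{|a_0|,1+M\}$, and only two cases remain.

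In the case $|a_0|\geq 1+M$, I have $N(\F)=|a_0|$ and $M<|a_0|$, so both $|a_0|+|a_{n-1}|\leq |a_0|+M<2|a_0|$ and $1+M/|a_0|\leq 2<2|a_0|$, each strict by $|a_0|>1$. In the case $|a_0|<1+M$, I have $N(\F)=1+M$ and obtain $|a_0|+|a_{n-1}|<(1+M)+M<2(1+M)$ together with $1+M/|a_0|<1+M<2(1+M)$, again using $|a_0|>1$ to push $M/|a_0|$ strictly below $M$. Either way, $\|W\|_\infty<2N(\F)$, and the hypothesis $N(\F)<\|W\|_\infty$ is not actually needed for the conclusion (it only singles out the interesting regime).

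The main obstacle is simply the bookkeeping needed to keep every inequality strict and obtain the factor-of-two (rather than factor-of-two-or-equal) conclusion; each case uses $|a_0|>1$ at precisely one key step to convert a $\leq$ into a $<$.
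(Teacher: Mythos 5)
Your proof is correct, and it actually establishes something slightly stronger than the paper's statement: you never use the hypothesis $N(\F)<\|W\|_\infty$, so your argument shows that $|a_0|>1$ alone forces $\|W\|_\infty<2N(\F)$. The paper's proof instead leans on its characterization theorem (Theorem 7.4: $\|W\|_\infty<N(\F)$ if and only if $1<|a_0|<1+M-|a_{n-1}|$): from $N(\F)<\|W\|_\infty$ together with $|a_0|>1$ it extracts $|a_0|\ge 1+M-|a_{n-1}|$, uses that to pin down $\|W\|_\infty=|a_0|+|a_{n-1}|$ (discarding the $1+M/|a_0|$ branch), and only then runs essentially the same two-case analysis on $\max\{|a_0|,1+M\}$ that you run. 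Your route is more self-contained and must therefore bound both branches of the max defining $\|W\|_\infty$ in each case, which you do; the paper's route buys a shorter case analysis at the price of invoking the earlier theorem and carrying an extra hypothesis that your argument reveals to be unnecessary. One small repair is needed in your preliminary observation: the chain $\|\F\|_1\ge|a_0|+M>1+M\ge\max\{|a_0|,1+M\}$ has a false last link whenever $|a_0|>1+M$. The intended conclusion $N(\F)=\max\{|a_0|,1+M\}$ is still valid, because $|a_0|+M$ dominates each of $|a_0|$ and $1+M$ separately (using $M\ge0$ for the first and $|a_0|>1$ for the second), so the row-sum norm of $\F$ with the coefficients still exceeds the other norm. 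Also note that your labels for $\|\F\|_1$ and $\|\F\|_\infty$ are swapped relative to the paper's convention for $\F$ as in (1.3), but since only their minimum $N(\F)$ enters the argument this is immaterial.
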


\begin{proof}
Suppose $|a_0|>1$ and $N(\F)<\|W\|_\infty$.
Let $M=\max\{ |a_k|\ \vert\ 1\leq k\leq n-1\}$. 
By Theorem~\ref{superFiedlerIff}, $|a_0|\geq 1+M-|a_{n-1}|$. 
Then 
	$$|a_0|+|a_{n-1}|\geq 1+M>1+\dfrac{M}{|a_0|}.$$
	Thus 
	$$\|W\|_\infty = \max\left\{ |a_0|+|a_{n-1}|,1+ \dfrac{M}{|a_0|} \right\}=|a_0|+|a_{n-1}|.$$
	Further, $$N(\F)=\min\left\{ \max\left\{ |a_0|,1+M \right\}, \max\left\{ 1,|a_0|+|a_1|+\ldots +|a_{n-1}|\right\} \right\} =\max\left\{ |a_0|,1+M \right\}.$$
	Suppose $N(\F)=|a_0|$. Then $|a_{n-1}|+1\leq 1+M\leq |a_0|$. Thus $\|W\|_\infty < 2 |a_0|$. And hence
	$\|W\|_\infty < 2 N(\F)$.
	
	Suppose $N(\F)=1+M$. Then  $|a_0|\leq 1+M$. Further $|a_{n-1}|\leq M<1+M$. Thus
	$|a_0|+|a_{n-1}|< 2(1+M)$. Therefore $\|W\|_\infty< 2(1+M)=2N(\F)$.
\end{proof}

In this section we compared upper bounds using the inverse of certain unit sparse companion matrices corresponding to the monic reversal polynomial.
Future work could explore other cases, as we have only explored the case when $\mathbf{u}=\mathbf{0}$
in line (\ref{eC2}). In this case, as was observed in~\cite{DDP}, significant improvement can be made compared to the Frobenius bounds. In particular, we 
observed that $\|W\|_\infty$ can give a better bound than $N(\F)$, and that when $N(\F)< \|W\|_\infty$, it will not
be an improvement by more than a factor of two.

  Further work could be done on exploring the lower bounds using the inverse of an intercyclic companion matrix. As with the upper bounds, 
there will be many options to consider, given the structure of the inverse companion matrix in line (\ref{eC2}).
 De Ter\'{a}n, Dopico, and P\'{e}rez \cite[Example 5.8]{DDP} give an example to illustrate that the lower bound using a Fiedler companion matrix can be significantly better than using a Frobenius companion matrix.

\section{Concluding comments}

It should be noted that there is a gap in the conditions of Theorem~\ref{bestEasilyInverted}, namely the case when $|a_0| > 1$ and $|a_{n-1}|<M$. Under these conditions, we suspect that there is no single matrix whose 1-norm provides a sharper bound than those provided by the 1-norms of other matrices of type $E_c(p^\sharp)^{-1}$. However, the following matrices of type $E_{n-2}(p^\sharp)^{-1}$ are useful. 
In particular, if $1\leq b\leq n-2$, define 
\begin{equation}\label{secondFiedler}
X_{b} =
\left[ \renewcommand{\arraystretch}{1.3}
\begin{array}{ccccc|c|c}
-a_{n-1} & \cdots & -a_{b+1} & \multicolumn{2}{c|}{O} & 0 & -a_0 \\ \hline
\multicolumn{2}{c}{} & & \multicolumn{2}{c|}{} & & \\
\multicolumn{2}{c}{}  & I_{n-2} & \multicolumn{2}{c|}{}  & O & O \\
\multicolumn{2}{c}{} & & \multicolumn{2}{c|}{} & & \\ \hline
\multicolumn{2}{c}{O} & {a_b}/{a_0} &\cdots & {a_1}/{a_0} & 1 & 0 \\
\end{array} \right].
\end{equation}

Note that $X_b$ is the inverse of the matrix 
\begin{equation}
\phantom{X_b=}
\left[ \renewcommand{\arraystretch}{1.3}
\begin{array}{cccccc|c}
\multicolumn{6}{c|}{}&\\
\multicolumn{2}{c}{}  & I_{n-2} & \multicolumn{2}{c}{}  &\multicolumn{1}{c|}{}  & O \\
\multicolumn{6}{c|}{}&\\ \hline
&O& & -\frac{a_b}{a_0}&\cdots&-\frac{a_1}{a_0}&1\\
-\frac{1}{a_0}&-\frac{a_{n-1}}{a_0}&\cdots&-\frac{a_{b+1}}{a_0}&\multicolumn{2}{c|}{O}&0\\
\end{array} \right],
\end{equation}
which is a Fiedler matrix of the monic reversal polynomial $p^\sharp$. 
In the following result, we consider a particular $X_\beta$ whose $1$-norm is derived from the first column, or one of the
last $\beta+1$ columns. 

\begin{theorem}\label{secondFiedlerIsBest} Let $M=\max\{ |a_k|\ \vert\ 1\leq k\leq n-1\}$
and $1\leq c\leq n-2$.
Suppose $1<|a_0|$, $|a_{n-1}|<M$ and $\|X_\beta\|_1 \leq \|X_b\|_1$ for all $b$, $1\leq b\leq n-2$. If \[ \|X_\beta\|_1 \in \left\{ 1+|a_{n-1}|,1+\left|\frac{a_{\beta-1}}{a_0}\right|,\ldots,1+\left|\frac{a_1}{a_0}\right|,|a_0| \right\}, \]
then
$\|X_\beta\|_1 \leq \|A\|_1$ for every matrix $A$ of type $E_c(p^\sharp)^{-1}$.
\end{theorem}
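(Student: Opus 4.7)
The plan is to lower-bound $\|A\|_1$ by inspecting individual column sums, and to show that \emph{every} element of the hypothesized set $\{1+|a_{n-1}|,\,1+|a_{\beta-1}/a_0|,\ldots,1+|a_1/a_0|,\,|a_0|\}$ is a lower bound for $\|A\|_1$. Since $\|X_\beta\|_1$ is assumed to lie in this set, the conclusion $\|X_\beta\|_1\leq\|A\|_1$ will follow immediately. Writing $A$ in the block form of $(\ref{eC2})$ with $\mathbf{u}=\mathbf{0}$, the key structural observation is that for each $j\in\{1,\ldots,n-2\}$ the coefficient attached to index $j$ appears in exactly one location in $A$: either as $-a_j$ somewhere in the first-row block $a_0\mathbf{y}^T$, or as $a_j/a_0$ somewhere in the $-H$ block. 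This partition is forced because Theorem~\ref{tsparse} places exactly one coefficient of $-p^\sharp$ on each subdiagonal of $C(p^\sharp)$, and $\mathbf{u}=\mathbf{0}$ clears the first column of the region housing $H$ and $\mathbf{y}^T$.

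I would then read the required lower bounds off the column sums of $A$. Column $n$ contains only the entry $-a_0$, so $\|A\|_1\geq|a_0|$. Column $1$ contains $-a_{n-1}$ from $a_0\mathbf{y}_1$ together with a $1$ at position $(2,1)$ from $I_c$, giving $\|A\|_1\geq 1+|a_{n-1}|$. For each $k\in\{1,\ldots,\beta-1\}$, locate the unique column $j\in\{1,\ldots,c\}$ of $A$ that holds the coefficient indexed by $k$; this column also carries a $1$ from $I_c$. If the entry is $a_k/a_0$ in $-H$, the column sum is at least $1+|a_k/a_0|$. If instead it is $-a_k$ in $a_0\mathbf{y}^T$, the column sum is at least $1+|a_k|$, and then $|a_0|>1$ yields $1+|a_k|\geq 1+|a_k/a_0|$. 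Thus $\|A\|_1\geq 1+|a_k/a_0|$ in every case, and combining these estimates completes the proof.

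The main obstacle I anticipate is the bookkeeping for where each coefficient lives. In $X_\beta$ (of type $E_{n-2}(p^\sharp)^{-1}$) the placement is rigid, but for a general $A$ of type $E_c(p^\sharp)^{-1}$ with $c<n-2$ the distribution of the $a_k$'s between $a_0\mathbf{y}^T$ and $-H$ depends on which sparse companion of $p^\sharp$ underlies $A$. The uniform case split on whether the coefficient for index $k$ sits in the first row or in $-H$ sidesteps this, since both alternatives feed into the same inequality once $|a_0|>1$ is invoked. I expect the hypothesis $|a_{n-1}|<M$ not to enter the argument directly; its role is instead to ensure that $X_\beta$, rather than $\F$ or $W$ (compare Theorem~\ref{bestEasilyInverted}), is the natural competitor for $A$.
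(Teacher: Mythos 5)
Your proposal is correct and follows essentially the same route as the paper: the paper's proof is exactly this case analysis, noting that $a_{n-1}$ sits at $(1,1)$ above the $1$ of $I_c$ (giving $\|A\|_1\geq 1+|a_{n-1}|$), that $a_0$ sits at $(1,n)$ (giving $\|A\|_1\geq|a_0|$), and that each $a_k$ contributes either $1+|a_k/a_0|$ or $1+|a_k|>1+|a_k/a_0|$ to some column sum. Your observations that the minimality of $X_\beta$ over $b$ and the hypothesis $|a_{n-1}|<M$ are not actually used in the argument are also consistent with the paper's proof.
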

\begin{proof}
Suppose $\|X_\beta\|_1 = 1+|a_{n-1}|$. Observe that $\|A\|_1 \geq 1+|a_{n-1}|$ as $a_{n-1}$ must be in the location $(1,1)$.

Suppose $\|X_\beta\|_1 = |a_0|$. Observe that $\|A\|_1 \geq |a_0|$ as $a_0$ must be in the location $(1,n)$.

Suppose $\|X_\beta\|_1 = 1+\left|\frac{a_{k}}{a_0}\right|$ where $1\leq k\leq \beta-1$. Observe that $\|A\|_1 \geq 1+\left|\frac{a_{k}}{a_0}\right|$ or $\|A\|_1 \geq 1+\left|a_{k}\right| > 1+\left|\frac{a_{k}}{a_0}\right|.$
\end{proof}

\begin{theorem}\label{secondFiedlerIff}
Given $M=\max\{ |a_k|\ \vert\ 1\leq k\leq n-1\}$ and $m =\max\{ k \ \vert\  M=|a_k| \}$.
Suppose $b \leq n-2$. Then
$\|X_b\|_1 < N(\F)$ if and only if 
$1 < |a_0| < 1+M$ and,
for some $b \geq m$, $|a_{b+1}| + \left|\dfrac{a_b}{a_0}\right| < M.$
\end{theorem}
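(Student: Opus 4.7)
The plan is to read off $\|X_b\|_1$ directly from the matrix structure in $(\ref{secondFiedler})$ and then compare it with $N(\F)$ under a dichotomy on $|a_0|$. Analyzing the columns of $X_b$: each of columns $1,\ldots,n-2$ receives a $1$ from $I_{n-2}$; columns $k=1,\ldots,n-1-b$ pick up $|a_{n-k}|$ from the top row; columns $k=n-1-b,\ldots,n-2$ pick up $|a_{n-1-k}|/|a_0|$ from the bottom row; column $n-1$ has sum $1$; and column $n$ has sum $|a_0|$. The overlap at column $k=n-1-b$ yields the combined term $1+|a_{b+1}|+|a_b|/|a_0|$. Putting it together,
\[
\|X_b\|_1 = \max\Bigl\{|a_0|,\ 1+|a_{b+1}|+\tfrac{|a_b|}{|a_0|},\ \max_{b+2\leq j\leq n-1}(1+|a_j|),\ \max_{1\leq j\leq b-1}\bigl(1+\tfrac{|a_j|}{|a_0|}\bigr)\Bigr\}.
\]

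For the necessity direction, I would proceed by contradiction on the ranges of $|a_0|$. If $|a_0|\geq 1+M$, then $N(\F)=|a_0|$ by the formulas for $\|\F\|_1$ and $\|\F\|_\infty$, and the column-$n$ sum alone gives $\|X_b\|_1\geq |a_0|=N(\F)$. If $|a_0|\leq 1$, then $N(\F)\leq 1+M$; to force $\|X_b\|_1\geq 1+M$ I would trace where an index $m$ with $|a_m|=M$ sits in $X_b$: if $m\geq b+1$, it appears in row $1$ at column $n-m$, contributing $\geq 1+M$; if $m\leq b$, it appears in row $n$ with weight $|a_m|/|a_0|\geq M$, so the corresponding column sum is $\geq 1+M/|a_0|\geq 1+M$. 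This forces $1<|a_0|<1+M$, under which $\|\F\|_1=1+M\leq \|\F\|_\infty$, so $N(\F)=1+M$. Requiring each term in the displayed maximum to be $<1+M$ then yields $|a_j|<M$ for $j\geq b+2$ (hence $m\leq b$), the inequality $|a_{b+1}|+|a_b|/|a_0|<M$, and automatically $|a_j|/|a_0|\leq M/|a_0|<M$ for $j\leq b-1$ since $|a_0|>1$.

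For sufficiency, I would assume $1<|a_0|<1+M$ and $b\geq m$ with $|a_{b+1}|+|a_b|/|a_0|<M$, and verify each term in the maximum defining $\|X_b\|_1$ is strictly below $1+M=N(\F)$: the column-$n$ term $|a_0|<1+M$ by hypothesis; the overlap column $n-1-b$ is below $1+M$ by the second hypothesis; columns $1\leq k\leq n-2-b$ yield $1+|a_j|<1+M$ because $j\geq b+2>m$ and $m$ is the largest index achieving $M$; and columns $n-b\leq k\leq n-2$ yield $1+|a_j|/|a_0|<1+M$ since $|a_j|\leq M$ and $|a_0|>1$.

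The main obstacle is handling the overlap column $n-1-b$ correctly: it is the unique column receiving contributions from both the top row (the $-a_{b+1}$ entry) and the bottom row (the $a_b/a_0$ entry), and it is this column whose sharpened sum $1+|a_{b+1}|+|a_b|/|a_0|$ gives rise to the characteristic inequality in the conclusion. Once the column-sum formula is pinned down, both directions reduce to termwise comparisons with $1+M$.
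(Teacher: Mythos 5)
Your proof is correct and follows essentially the same route as the paper: both start from the same explicit column-sum formula for $\|X_b\|_1$ (with the overlap column contributing $1+|a_{b+1}|+|a_b|/|a_0|$), rule out $|a_0|\geq 1+M$ and $|a_0|\leq 1$ by exhibiting a column sum that is at least $N(\F)$, and then reduce both directions to a termwise comparison with $1+M=N(\F)$. Your write-up is if anything more detailed than the paper's, which leaves the extraction of $|a_{b+1}|+|a_b/a_0|<M$ and of $m\leq b$ implicit (note only that $m\leq b$ needs the overlap inequality, not just $|a_j|<M$ for $j\geq b+2$, to exclude $m=b+1$).
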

\begin{proof} Suppose $b \leq n-2$.
\[\|X_b\|_1 = \max \left\{ |a_0|,1+|a_{n-1}|,\ldots,1+|a_{b+2}|,1+|a_{b+1}|+\left|\dfrac{a_b}{a_0}\right|,1+\left|\dfrac{a_{b-1}}{a_0}\right|,\ldots,1+\left|\dfrac{a_1}{a_0}\right| \right\}.\]

Suppose $\|X_b\|_1 < N(\F)$.

Suppose $1+M \leq |a_0|$. Then $1<|a_0|+|a_1|+\cdots +|a_{n-1}|$ and $N(\F) = |a_0|$. Since $\|X_b\|_1 \geq |a_0| = N(\F)$, this a contradiction. Therefore $|a_0| < 1+M$. This also implies that $N(\F) \leq 1+M$.

Suppose $|a_0| \leq 1$. Then $\left|\dfrac{a_i}{a_0}\right| \geq |a_i|$ for all $i$, so that 
\[ \|X_b\|_1 \geq 1+M \geq N(\F) \]
which is a contradiction. 
Therefore $|a_0|>1$.

With these two conditions, we have $N(\F) = 1+M$.  
Since $\|X_b\|_1 < N(\F)$, it follows that $m \leq b$.

For the converse, suppose $|a_{b+1}|+\left|\dfrac{a_b}{a_0}\right| < M$ for some $b \geq m$ and $1< |a_0| < 1+M$.
The latter condition implies $N(\F) = 1+M$.
Since $m \leq b$, $1+|a_i| < 1+M$ for all $i \geq b+2$. Further,
$1+\left| \dfrac{a_i}{a_0} \right| < 1+M$ for all $i \leq b-1$ since $|a_0|>1$.
It follows that 
$\|X_b\|_1 < 1+M=N(\F)$.
\end{proof}

\begin{example}\label{xw}
Consider the polynomial \[p = x^8 +4x^7 -x^6 +x^5 +17x^4 +20x^3 -10x^2 -5x +5. \] Observe that $|a_0| = 5 > 1$ and $|a_{n-1}| = 4 < 20 = M$, so we know that Theorem $\ref{bestEasilyInverted}$ part $\ref{superFiedlerIsBest}$ applies. 

One can check that $\|X_5\|_1=|a_0|$ and 
$\|X_5\|_1 \leq \|X_b\|_1$ for all $b$, $1\leq b\leq n-2$. 
Hence, by Theorem $\ref{secondFiedlerIsBest}$
$\|X_5\|_1 \leq \|A\|_1$ for every matrix $A$ of type $E_c(p^\sharp)^{-1}$.
Further, since
$1 < |a_0|=5 < 1+M-|a_0|=16$ the condition in Theorem $\ref{superFiedlerIff}$
is satisfied. And 
since 
$|a_0| = 5 < 21 = 1+M$ and $|a_6|+\left|\frac{a_5}{a_0}\right| = 1.2 < 20 = M$ for $b=5\geq 4$,
the conditions of 
Theorem $\ref{secondFiedlerIff}$ are satisfied.
Therefore, we should expect that $W$ and $X_5$ provide sharper bounds than all other inverse companion matrices in 
$(\ref{eC2})$ with $\mathbf{u}=\mathbf{0}$, including the Frobenius companion matrix. We can verify this directly: 
$N(\F) = \min\left\{\max \left\{ 5,21 \right\}, \max\left\{ 1,63 \right\} \right\} =21,$
\[
W = \left[ \begin{array}{rrrrrrrr}
-4   & 0 & 0 & 0 & 0 & 0 & 0 & -5 \\
 1   & 0 & 0 & 0 & 0 & 0 & 0 &  0 \\
-1   & 1 & 0 & 0 & 0 & 0 & 0 &  0 \\
-2   & 0 & 1 & 0 & 0 & 0 & 0 &  0 \\
 4   & 0 & 0 & 1 & 0 & 0 & 0 &  0 \\
 3.4 & 0 & 0 & 0 & 1 & 0 & 0 &  0 \\
 0.2 & 0 & 0 & 0 & 0 & 1 & 0 &  0 \\
-0.2 & 0 & 0 & 0 & 0 & 0 & 1 &  0 \\
\end{array} \right] 
{\rm \quad and \quad}
X_5 = \left[ \begin{array}{rrrrrrrr}
-4 & 1   & 0   & 0 &  0 &  0 & 0 & -5 \\
 1 & 0   & 0   & 0 &  0 &  0 & 0 &  0 \\
 0 & 1   & 0   & 0 &  0 &  0 & 0 &  0 \\
 0 & 0   & 1   & 0 &  0 &  0 & 0 &  0 \\
 0 & 0   & 0   & 1 &  0 &  0 & 0 &  0 \\
 0 & 0   & 0   & 0 &  1 &  0 & 0 &  0 \\
 0 & 0   & 0   & 0 &  0 &  1 & 0 &  0 \\
 0 & 0.2 & 3.4 & 4 & -2 & -1 & 1 &  0 \\
\end{array} \right].
\]
Both $W$ and $X_5$ provide sharper bounds than $N(\F)$, and the ratios are greater than two:
\[
\frac{N(\F)}{\|W\|_\infty} = \frac{21}{9} \doteq 2.3 
\rm{ \quad and \quad}
\frac{N(\F)}{\|X_5\|_1} = \frac{21}{5} = 4.2 
\]
In this example, $\|X_5\|_1 < \|W\|_\infty$ but this may not always be the case.
\end{example}

\begin{remark}\label{large} Example~$\ref{xw}$ illustrates that ${\|W\|_\infty}$ and ${\|X_b\|_1}$ can be significantly smaller
than $N(\F)$. In fact, the ratios can be made arbitrarily large. In particular, 
De Ter\'{a}n, Dopico, and P\'{e}rez \cite[Example 5.6]{DDP} provide an example with
${\|X_1\|_1}={\|W\|_\infty}=1+10^m$ and $N(F)=1+10^{2m}$ for any integer $m>0$. 
\end{remark}

Since $X_b$ is the inverse of a Fiedler matrix of a monic reversal polynomial, Theorem~5.4(b) of \cite{DDP} implies
that if $\|X_b\|_1 < \|W\|_\infty$ for a given polynomial $p$, then $\|W\|_\infty$ is at most twice $\|X_b\|_1$. The following theorem is a slight refinement for $X_b$.

\begin{theorem}\label{superJogRel} Let $p$ be a polynomial as in (\ref{poly}). If $1\leq b\leq n-2$ and
$|a_0|> 1$, then
\[ \frac{\|W\|_{\infty}}{\|X_b\|_1} \leq 2-\frac{1}{|a_0|}. \]
\end{theorem}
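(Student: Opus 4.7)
The plan is to do a direct case analysis based on which of the two terms realizes $\|W\|_\infty = \max\{|a_0|+|a_{n-1}|,\ 1+M/|a_0|\}$, combined with suitable lower bounds on $\|X_b\|_1$ that are visible directly from the explicit column sum expression
\[
\|X_b\|_1 = \max\left\{|a_0|,\ 1+|a_{n-1}|,\ldots,1+|a_{b+2}|,\ 1+|a_{b+1}|+\tfrac{|a_b|}{|a_0|},\ 1+\tfrac{|a_{b-1}|}{|a_0|},\ldots,1+\tfrac{|a_1|}{|a_0|}\right\}.
\]
The key preliminary observation is that, regardless of $b$, every $X_b$ contains the first column of a matrix of the form (\ref{secondFiedler}), so $\|X_b\|_1 \geq |a_0|$ and $\|X_b\|_1 \geq 1+|a_{n-1}|$ (the latter coming either from the $(n-1)$-st column sum when $b\leq n-3$ or from the combined term $1+|a_{n-1}|+|a_{n-2}|/|a_0|$ when $b=n-2$).

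For the case $\|W\|_\infty = |a_0|+|a_{n-1}|$, I would further split on whether $|a_0| \geq 1+|a_{n-1}|$ or not. In the first subcase, use $\|X_b\|_1 \geq |a_0|$, so $(2-\tfrac{1}{|a_0|})\|X_b\|_1 \geq 2|a_0|-1$, and the subcase hypothesis gives $2|a_0|-1\geq |a_0|+|a_{n-1}|$. In the second subcase, use $\|X_b\|_1 \geq 1+|a_{n-1}|$; the inequality $(2-\tfrac{1}{|a_0|})(1+|a_{n-1}|) \geq |a_0|+|a_{n-1}|$ rearranges into $\tfrac{|a_0|-1}{|a_0|}\bigl(|a_{n-1}|-(|a_0|-1)\bigr)\geq 0$, which holds because $|a_0|>1$ and the subcase hypothesis gives $|a_{n-1}|\geq |a_0|-1$.

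For the case $\|W\|_\infty = 1+M/|a_0|$, the main step is to show that $\|X_b\|_1 \geq 1+M/|a_0|$. Let $k$ be an index with $|a_k|=M$. If $k\geq b+2$ then $\|X_b\|_1 \geq 1+|a_k|=1+M \geq 1+M/|a_0|$; if $k=b+1$ then $\|X_b\|_1 \geq 1+|a_{b+1}|+|a_b|/|a_0|\geq 1+M$; if $k=b$ then $\|X_b\|_1 \geq 1+|a_{b+1}|+|a_b|/|a_0|\geq 1+M/|a_0|$; and if $k\leq b-1$ then $\|X_b\|_1 \geq 1+|a_k|/|a_0|=1+M/|a_0|$. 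Once this is in hand, multiplying by $2-\tfrac{1}{|a_0|}>1$ (using $|a_0|>1$) gives $(2-\tfrac{1}{|a_0|})\|X_b\|_1 \geq 1+M/|a_0|=\|W\|_\infty$.

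The main obstacle, if any, is the bookkeeping in Case 1b: keeping track of which direction of the inequality needs to be pushed and recognizing that the expression factors through $(|a_0|-1)$. Once one spots that factorization the rest is routine, and the case where $M$ appears in positions $b$ or $b+1$ of $X_b$ needs to be handled carefully because the coefficient sits in the row whose column sum has an extra summand. Overall, though, no submultiplicativity argument or matrix manipulation is needed; everything follows from the explicit formulas for $\|W\|_\infty$ and $\|X_b\|_1$ already written down in the paper.
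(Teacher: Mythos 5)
Your proof is correct and follows essentially the same route as the paper's: both rest on the explicit two-term formula for $\|W\|_\infty$ and the lower bounds $\|X_b\|_1 \geq \max\{|a_0|,\ 1+|a_{n-1}|,\ 1+M/|a_0|\}$, with the same algebra in each case (your factorization in subcase 1b is the paper's identity $\frac{|a_0|+|a_{n-1}|}{1+|a_{n-1}|}=1+\frac{|a_0|-1}{1+|a_{n-1}|}$ in disguise). The only differences are cosmetic: you case first on which term attains $\|W\|_\infty$ where the paper cases on which term attains the lower bound $U$, and you spell out the verification that $\|X_b\|_1\geq 1+M/|a_0|$ (which the paper merely asserts); note also a harmless labeling slip, since the column sum containing $1+|a_{n-1}|$ is the first column of $X_b$, not the $(n-1)$-st.
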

\begin{proof}
Suppose $\|X_b\|_1 < \|W\|_\infty$. Then $\|W\|_\infty = |a_0| + |a_{n-1}|$ since otherwise $\|W\|_\infty = 1+M/|a_0|$ but $\|X_b\|_1 \geq 1+M/|a_0|$. 
Let $U=\max\{1+M/|a_0|, 1+|a_{n-1}|, |a_0|\}$. Observe that $\|X_b\|_1\geq U$.

Suppose that $U=|a_0|.$ 
Then \[ \frac{\|W\|_\infty}{\|X_b\|_1} \leq \frac{|a_0|+|a_{n-1}|}{|a_0|} \leq \frac{|a_0|+|a_0|-1}{|a_0|} = 2 - \frac{1}{|a_0|}. \]
Suppose that $U=1+|a_{n-1}|$.  
Then \[ \frac{\|W\|_\infty}{\|X_b\|_1}\leq 
\frac{|a_0|+|a_{n-1}|}{1+|a_{n-1}|} = 1+ \frac{|a_0|-1}{1+|a_{n-1}|} \leq 1+\frac{|a_0|-1}{|a_0|} = 2 - \frac{1}{|a_0|}. \]
Suppose that $U=1+M/|a_0|$. 
If $|a_0| \geq 1+|a_{n-1}|$ then
\[ \frac{\|W\|_\infty}{\|X_b\|_1} \leq 
\frac{|a_0|+|a_{n-1}|}{1+M/|a_0|} \leq \frac{|a_0|+|a_{n-1}|}{|a_0|} \leq 2 - \frac{1}{|a_0|}. \] 
Otherwise, $|a_0| < 1+|a_{n-1}|$, and \[ \frac{\|W\|_\infty}{\|X_b\|_1} \leq \frac{|a_0|+|a_{n-1}|}{1+M/|a_0|} \leq 
\frac{|a_0|+|a_{n-1}|}{1+|a_{n-1}|} = 1+ \frac{|a_0|-1}{1+|a_{n-1}|} < 1+\frac{|a_0|-1}{|a_0|} = 2 - \frac{1}{|a_0|}. \]
\end{proof}

As a final note, we would like to point out that there is close connection between the unit sparse companion matrices
and the set of block Kronecker pencils described in \cite{DLPV}. In particular, if $C(p)$ is a unit sparse companion matrix associated
with a monic polynomial $p(x)$, then the matrix pencil $\lambda I_n-C(p)$ is of the form
$$\lambda I_n-C(p)=
\left[\begin{array}{c|c}
\mathcal{L}_m(\lambda ) &O\\ \hline
K(\lambda ) & \mathcal{L}_{n-m-1}(\lambda)^T
\end{array}\right],$$
with $\mathcal{L}_k(\lambda )$ a $k \times (k+1)$ matrix of the form
$$\mathcal{L}_k(\lambda )=\left[ \begin{array}{ccccr}
\lambda  &-1&0&\cdots&0\\
0&\lambda &-1&\ddots&\vdots\\
\vdots&\ddots&\ddots&\ddots&0\\
0&\cdots&0&\lambda &-1
\end{array}\right].
$$
This matrix pencil is permutationally similar to a block Kronecker pencil. The connection may
provide opportunities to extend this work to (monic) matrix polynomials.

\textbf{Acknowledgement.} {Research supported in part by NSERC Discovery Grant 203336 and an NSERC USRA.
We thank a referee for a very careful reading of our paper and for pointing out the connection to \cite{DLPV} 
in the conclusion.}


\begin{thebibliography}{99}

\bibitem{BF} D.A. Bini and G. Fiorentino. 
Design, analysis, and implementation of a multiprecision polynomial rootfinder.
\emph{Numerical Algorithms} 23.2-3 (2000) 127--173.

\bibitem{EKSV}
B. Eastman, I.-J. Kim, B.L. Shader, and K.N. Vander Meulen.
Companion matrix patterns.
\emph{Linear Algebra Appl.} 463 (2014) 255--272.
See also, Corrigendum to `Companion matrix patterns'. \emph{Linear Algebra Appl.} (2017) to appear.

\bibitem{DDP}
F. De Ter\'{a}n, F.M. Dopico, and J. P\'{e}rez.
New bounds for roots of polynomials based on Fiedler companion matrices.
\emph{Linear Algebra Appl.} {451} (2014) 197--230.

\bibitem{DLPV}
F.M. Dopico, P. Lawrence, J. P\'{e}rez, P. Van Dooren.
Block Kronecker linearizations of matrix polynomials and their
backward errors. \emph{Preprint}. Available as MIMS EPrint 2016.34, School of Mathematics,
The Univeristy of Manchaster, UK. 


\bibitem{Fiedler}
M. Fiedler.
A note on companion matrices.
\emph{Linear Algebra Appl.} {372} (2003) 325--331.

\bibitem{GSSV} C. Garnett, B.L. Shader, C.L. Shader, and P. van den Driessche. 
Characterization of a family of generalized companion matrices. \emph{Linear Algebra Appl.} 498 (2016) 360--365.

\bibitem{HJ}
R.A. Horn and C.R. Johnson.
\emph{Matrix Analysis.} Cambridge University Press, Cambridge 1985.

\bibitem{Marden}
M. Marden. 
\emph{Geometry of Polynomials}. 2nd ed. American Mathematical Society, Providence 1985.

\bibitem{Melman}
A. Melman. Modified Gershgorin disks for companion matrices.
\emph{SIAM Review} {54.2} (2012) 355--373.


\end{thebibliography}
\end{document}